\documentclass[10pt, letterpaper, conference]{IEEEconf}
\IEEEoverridecommandlockouts
\overrideIEEEmargins
\usepackage{cite}
\usepackage{amsmath,amssymb,amsfonts}
\usepackage{algorithmic}
\usepackage{graphicx}
\usepackage{textcomp}
\usepackage{xcolor}

\newtheorem{definition}{\upshape\bfseries Definition}

\newtheorem{corollary}{\upshape\bfseries Corollary}
\newtheorem{lemma}{\upshape\bfseries Lemma}
\newtheorem{proposition}{\upshape\bfseries Proposition}
\newtheorem{theorem}{\upshape\bfseries Theorem}


\usepackage{bm}


\newcommand{\Natn}{\mathbb{N}}


\DeclareMathOperator{\rank}{rank}
\DeclareMathOperator{\img}{im}              
\renewcommand{\subset}{\subseteq}
\newcommand{\range}[1]{[\![#1]\!]}          
\newcommand{\infconv}[2]{}
\newcommand{\response}{\bm{\gamma}}

\newcommand{\OO}{\bm{\mathcal{O}}}       
\newcommand{\ctrb}{\bm{\mathcal{C}}}       
\renewcommand{\AA}{\bm{A}}          
\newcommand{\BB}{\bm{B}}            
\newcommand{\CC}{\bm{C}}            
\newcommand{\DD}{\bm{D}}            
\newcommand{\HH}{\bm{H}}            
\newcommand{\PPsi}{\bm{\Psi}}       
\newcommand{\GGamma}{\bm{\Gamma}}   
\newcommand{\states}{\mathbb{R}^n}
\newcommand{\weakuo}{\mathcal{V}}

\newcommand{\inputs}{\langle \Delta_s \rangle}
\newcommand{\dinputs}{\langle \Delta_{2s} \rangle}
\newcommand{\spanof}[1]{\langle #1 \rangle}

\newcommand{\xx}{\bm{x}}            
\newcommand{\uu}{\bm{u}}            
\newcommand{\vv}{\bm{v}}            


\bibliographystyle{IEEEtran}

\makeatletter
\let\NAT@parse\undefined
\makeatother

\usepackage{etoolbox}
\makeatletter
\patchcmd{\@begintheorem}{\textit}{\textbf}{}{}
\makeatother

\usepackage{hyperref}
\hypersetup{
    colorlinks=true,
    linkcolor={red!50!black},
    citecolor={blue!80!black},
    urlcolor={blue!80!black}
}

\begin{document}

\title{\LARGE \bf Invertibility of Discrete-Time Linear Systems with Sparse Inputs
}

\author{Kyle Poe, Enrique Mallada, and Ren\'e Vidal 
\thanks{The authors thank the support of the ONR MURI grant 503405-78051, Control and Learning Enabled Verifiable Robust AI (CLEVR-AI)}
\thanks{K. Poe is with the Applied Mathematics and Computational Science Group, University of Pennsylvania,
PA 19104
        {\tt\small kpoe@sas.upenn.edu}}
\thanks{E. Mallada is with the Department of Electrical and Computer Engineering, Johns Hopkins University,
MD 21218
        {\tt\small mallada@jhu.edu}}
\thanks{R. Vidal is with the Departments of Electrical and Systems Engineering, Radiology, Computer and Information Science, Statistics and Data Science, University of Pennsylvania,
PA 19104
        {\tt\small vidalr@seas.upenn.edu}}
}

\maketitle

\begin{abstract}
One of the fundamental problems of interest for discrete-time linear systems is whether its input sequence may be recovered given its output sequence, a.k.a. the left inversion problem. Many conditions on the state space geometry, dynamics, and spectral structure of a system have been used to characterize the well-posedness of this problem, without assumptions on the inputs. However, certain structural assumptions, such as input sparsity, have been shown to translate to practical gains in the performance of inversion algorithms, surpassing classical guarantees. Establishing necessary and sufficient conditions for left invertibility of systems with sparse inputs is therefore a crucial step toward understanding the performance limits of system inversion under structured input assumptions. In this work, we provide the first necessary and sufficient characterizations of left invertibility for linear systems with sparse inputs,  echoing classic characterizations for standard linear systems. The key insight in deriving these results is in establishing the existence of two novel geometric invariants unique to the sparse-input setting, the weakly unobservable and strongly reachable subspace arrangements. By means of a concrete example, we demonstrate the utility of these characterizations. We conclude by discussing extensions and applications of this framework to several related problems in sparse control.
\end{abstract}

\section{Introduction}
Dynamical systems, ubiquitous in modern applied mathematics, are often characterized by their intrinsic properties, such as stability and orbit structure. However, in many instances, one is less interested in the system itself, and more interested in the relationship the system induces between some \emph{input} time series, and the resulting \emph{observations}, usually produced as some state-dependent function of the inputs. Nowhere is the importance of this fundamental relationship more apparent than in signal processing, where one observes some transformed or corrupted version of an input signal, and wishes to recover the original signal. 

In systems theory, the ability to uniquely recover a sequence of inputs given a sequence of outputs, or demonstrate the existence of a sequence of inputs which produces a desired output, is called \emph{invertibility} \cite{sain1969invertibility, hirschorn1979invertibility}. In addition to guaranteeing the well-posedness of signal recovery problems, left invertibility is a necessary condition for the existence of unknown input observers, and is closely related to fault detection \cite{liu1994detection, edelmayer2004input} and input observability \cite{hou1998input, martinelli2019nonlinear}. 

In the linear setting, several equivalent and interpretable characterizations of invertibility providing different perspectives on the problem have been established. In particular, the geometric approach emphasizes certain invariant subspaces of the system which give rise to the so-called special coordinate basis \cite{sannuti1987special}, which has been used to great practical effect in observer design \cite{tranninger2023unknown}. Another characterization focuses on the algebraic properties of the input-output map, establishing a clear path to the design of delayed system inverses \cite{sain1969invertibility, ansari2019deadbeat}. Yet another characterization concerns the zeros of the Rosenbrock system matrix \cite{trentelman2002control}, providing a connection to transfer function methods and the stronger problem of stable invertibility \cite{moylan1977stable}. These differing characterizations illuminate complementary aspects of the system, and have been used to great effect in the analysis of various recovery problems.

In recent years, there has been an increased awareness of the benefit of modeling input structure at the heart of various control applications. Notably, sparsity in the inputs has been explored in some depth \cite{sefati2015linear, poe2023necessary, chakraborty2023joint, sriram2020control}, particularly in the context of networked systems \cite{florez2020structural, zhang2020state}. Various algorithms have been developed for online recovery of sparse signals \cite{charles2016dynamic, charles2020efficient, joseph2015online}, and more recently, for the recovery of sparse inputs to general linear systems \cite{sefati2015linear, joseph2017noniterative, chakraborty2023joint}. Necessary and sufficient conditions for the finite-horizon case have been established in \cite{poe2023necessary}, a problem for which a performant Bayesian recovery algorithm was introduced in \cite{chakraborty2023joint}.

Despite the empirical success of these approaches, characterizations of the well-posedness of the infinite-horizon left invertibility problem for linear systems with sparse inputs have yet to emerge. Such characterizations are vital in particular for applications to structured and networked systems, in which the minimial required delay for input recovery may be generically nontrivial \cite{garin2022generic}.

In this work, we establish necessary and sufficient characterizations of left invertibility for linear systems with sparse inputs, which parallel established characterizations for standard linear systems. Our contributions are as follows: 
\begin{enumerate}
    \item We introduce the notion of weakly unobservable and strongly reachable subspace arrangements, generalizing key invariants of classical geometric linear systems theory to the sparse input setting, and show that these objects can be computed and used to directly certify invertibility.
    \item We establish rank-based conditions for left  sparse invertibility, and show that if an inverse exists, it may be realized with finite delay.
    \item The invertibility of systems with sparse inputs having temporally periodic support patterns is characterized via the zeros of a generalized Rosenbrock matrix, and this construction is used to provide a final necessary and sufficient condition for left invertibility under a generic sparse input assumption.
    \item We present an example to illustrate application of these ideas, and conclude by discussing extensions and connections to related problems in sparse control.
\end{enumerate}

\section{Preliminaries}
In this section, we first introduce the basic notation that will be used throughout the paper (see Section \ref{sec:notation}). Then in Section \ref{sec:linear-systems}, we overview several necessary and sufficient conditions for left invertibility of linear systems, including geometric, rank-based and spectral characterizations. Finally, in Section \ref{sec:subspace-arrangements} we review basic properties of subspace arrangements that are needed to extend classical invertibility results to linear systems with sparse inputs.

\subsection{Notation}
\label{sec:notation}
We denote $\Natn := \{0, 1, 2, \ldots\}$, and for any natural number $N \in \Natn$, $\range{N} := \{0, 1, \ldots, N-1\}$.
Let $\Omega, \Omega'$ be sets. We define the product $\Omega \times \Omega' = \{(\omega, \omega'): \omega \in \Omega, \omega' \in \Omega'\}$, and identify $\Omega^N$ with the set of functions $\{\omega: \range{N} \to \Omega\}$; and equivalently, the set of $N$-tuples $(\omega_0, \omega_1, \ldots, \omega_{N-1})$. $|\Omega|$ is the cardinality of $\Omega$.  For a sequence $\omega: \Natn \to \Omega$, we denote by $[\omega]_k^N \in \Omega^N$ the tuple $(\omega_k, \omega_{k+1}, \ldots, \omega_{k + N -1})$, which may be read as the \emph{first $N$ elements of $\omega$ beginning at $k$}. We also define the shift operator $\sigma$ on tuples and sequences such that $\forall k, \sigma(\omega)_k = \omega_{k+1}$; note that $\sigma: \Omega^N \to \Omega^{N-1}$  for tuples and $\sigma: \Omega^\Natn \to \Omega^{\Natn}$ for sequences. We liberally define $0$ to be the zero element in the relevant context. 
We denote the $k$th canonical Euclidean basis vector by $\bm{e}_k \in \mathbb{R}^n$, the canonical subspace associated with $S \subset \range{m}$ as $\spanof{S} := \text{span}\{\bm{e}_i\}_{i \in S}$, and the preimage of a set $\mathcal{V}$ under a linear map/matrix $\AA$ as $\AA^{-1}\mathcal{V} := \{\xx: \AA\xx \in \mathcal{V}\}$.
If $\Delta$ is a set of sets, given $S, T: \Natn \to \Delta$, we define the sequence $S \cup T := (S_k \cup T_k)_{k \in \Natn}$. If $\Delta$ consists of subsets of the support of $\AA$, for $S \in \Delta$, $\AA_S$ is the matrix consisting of columns indexed by $S$.
For a block matrix $\bm{\Theta}_N$ with $N$ block columns $\{\bm{\Theta}_i\}_{i \in \range{N}}$ and $S \in \Delta^N$, we define the matrix $\bm{\Theta}_S$ to be the block matrix with $N$ block columns $\{(\bm{\Theta}_i)_{S_i}\}_{i \in \range{N}}$. 

\subsection{Linear Systems}
\label{sec:linear-systems}
Throughout, we consider a fixed, finite-dimensional, discrete time linear system $\Sigma = (\AA, \BB, \CC, \DD) \in \mathbb{R}^{n\times n} \times \mathbb{R}^{n \times m} \times \mathbb{R}^{p \times n} \times \mathbb{R}^{p \times m}$. To this system, for any $N > 0$, we associate the following block matrices; respectively, the \emph{observability}, \emph{finite response}, and \emph{controllability} matrices:
    \begin{align*}
        \OO_N \!&=\!\! \begin{bmatrix}
            \CC \\ \CC\AA \\ \CC\AA^2\\ \vdots \\ \CC\AA^{N-1}
        \end{bmatrix}\!, 
        \GGamma_N \!=\!\! \begin{bmatrix}
            \DD \\
            \CC\BB \!&\! \DD \\
            \CC\AA\BB \!&\! \CC\BB \!&\! \DD \\
            \vdots \!&\!&\! \ddots\!&\! \ddots \\
            \CC\AA^{N-2}\BB \!&\! \cdots \!&\!&\! \CC\BB\!&\! \DD
        \end{bmatrix} \! .\\
        \ctrb_N \!&=\! \begin{bmatrix}
            \AA^{N-1}\BB & \cdots & \AA^2\BB & \AA\BB & \BB
        \end{bmatrix}
        .
    \end{align*}
    Note that our definition of the controllability matrix has a reversed order of powers from the typical definition; this permits concise identities like the following, for any $M < N$:
    \begin{equation}        
        \GGamma_{N} = \begin{bmatrix}
            \GGamma_{N-M} & 0 \\ \OO_M\ctrb_{N-M} & \GGamma_{M}
        \end{bmatrix} = \begin{bmatrix}
            \DD & 0 \\ \OO_{N-1}\BB & \GGamma_{N-1}
        \end{bmatrix} .
    \end{equation}

    Given an input sequence $\uu: \Natn \to \mathbb{R}^m$, we define the \emph{infinite response} of $\Sigma$ from $\xx_0 \in \mathbb{R}^n$ to be the sequence $\response(\xx_0, \uu): \Natn \to \mathbb{R}^p$ such that for any $N, k$, the following conditions hold:
    \begin{align}
        [\response(\xx_0, \uu)]_k^N 
        &= \OO_N\xx_k + \GGamma_N[\uu]_k^N,
    \\
        \sigma^k(\response(\xx_0, \uu)) 
        &= \response(\AA^k\xx_0 + \ctrb_k[\uu]_0^k, \sigma^k(\uu)).
    \end{align}
    Note that $(\response(\xx_0, \uu)_k)_{k \in \Natn}$ is just the sequence of outputs for the linear system with initial state $\xx_0$ and inputs $(\uu_k)_{k \in \Natn}$, and that $\response$ is linear:  $\response(\alpha(\xx_0, \uu) + \beta(\xx_0', \uu')) = \alpha \response(\xx_0,\uu) + \beta \response(\xx_0', \uu')$. To denote the zero-state response, we write $\response_0(\uu) := \response(0, \uu)$.

The geometric approach to analyzing system invertibility emphasizes certain subspaces linking output behavior to choice of inputs:
    \begin{definition}[Invariant Subspaces]\,
        \begin{itemize}
            \item The \emph{weakly unobservable subspace} $\mathcal{V}(m) \subset \mathbb{R}^n$ consists of $\xx$ s.t. $ \exists \uu: \Natn \to \mathbb{R}^m$, $\response(\xx, \uu) = 0$.
            \item The \emph{strongly reachable subspace} $\mathcal{T}(m) \subset \mathbb{R}^n$ consists of $\xx$ s.t. $\exists \uu: \Natn \to \mathbb{R}^m, \exists N \in \Natn$, $\GGamma_N[\uu]_0^N = 0$ and $\ctrb_N[\uu]_0^N = \xx$. 
        \end{itemize}
    \end{definition}

Note that $\mathcal{V}(m)$ contains the unobservable subspace $\ker \OO_n$, and $\mathcal{T}(m)$ is a subspace of the reachable subspace $\img \ctrb_n$. They are thus naturally viewed as respectively weakened and strengthened versions of these spaces, accounting for particular choices of inputs.

An alternative viewpoint of the input-output characteristics of a linear system involves its invariant zeros, which are defined as the $z \in \mathbb{C}$ such that the following matrix pencil drops rank:

\begin{definition}[Rosenbrock System Matrix]
    \begin{equation}        
        \bm{R}(z) = \begin{bmatrix}
            \AA - z\bm{I} & \BB \\ \CC & \DD
        \end{bmatrix}
    \end{equation}
\end{definition}
    \vspace{1em}
    We now recall the definition of left invertibility for linear systems \cite{sain1969invertibility}:
    \begin{definition}[Left Invertibility]
        We say that $\Sigma$ is \emph{left invertible} if for any $\uu, \uu': \Natn \to \mathbb{R}^m$, $\response_0(\uu) = \response_0(\uu')$ implies $\uu = \uu'$.
    \end{definition}
    
    There are several equivalent characterizations of when left invertibility holds, which can be considered as arising from complementary \emph{perspectives} on what it means for a system to be invertible. In the case of left invertibility:

    \begin{theorem}[Left Invertibility, \cite{trentelman2002control, sain1969invertibility}]\label{thm:leftinvlinear}
    The following are equivalent:
        \begin{enumerate}
            \item $\Sigma$ is left invertible.
            \item $\weakuo(m) \cap \mathcal{T}(m) = 0$ and $\ker \begin{bmatrix}
                \BB \\ \DD
            \end{bmatrix} = 0$.
            \item $\exists N$, $\rank \GGamma_N - \rank \GGamma_{N-1} = m$.
            \item $\exists z \in \mathbb{C}$, $\rank \bm{R}(z) = n + m$.
        \end{enumerate}
    \end{theorem}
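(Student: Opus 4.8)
\noindent\emph{Proof strategy.} I would establish the cycle $(1)\Rightarrow(2)\Rightarrow(3)\Rightarrow(1)$ and prove $(1)\Leftrightarrow(4)$ separately through the transfer matrix. By linearity of $\response$, left invertibility is equivalent to the single implication $\response_0(\uu)=0\Rightarrow\uu=0$, and I would use this form throughout, together with the block identities for $\OO_N,\GGamma_N,\ctrb_N$ and the shift property of $\response$.

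For $(1)\Rightarrow(2)$, I would show that violating either condition yields a nonzero input with identically zero zero-state response. A nonzero $\uu_0\in\ker\begin{bmatrix}\BB\\ \DD\end{bmatrix}$ gives the impulse $(\uu_0,0,0,\dots)$, whose trajectory never leaves the origin (as $\BB\uu_0=0$) and whose output vanishes identically (as $\DD\uu_0=0$). A nonzero $\xx\in\weakuo(m)\cap\mathcal{T}(m)$ gives the concatenation of an input that steers the origin to $\xx$ over a finite horizon while producing zero output (a witness for $\xx\in\mathcal{T}(m)$) followed by an input that holds the output at zero from $\xx$ onward (a witness for $\xx\in\weakuo(m)$); by the shift property this concatenation has zero response yet is nonzero. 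The converse $(2)\Rightarrow(1)$ can in fact be seen directly: if $\response_0(\uu)=0$ then every reached state $\xx_k:=\ctrb_k[\uu]_0^k$ lies in $\mathcal{T}(m)$ (it is reached from the origin with zero output on $\range{k}$) and in $\weakuo(m)$ (the shift property gives $\response(\xx_k,\sigma^k\uu)=\sigma^k\response_0(\uu)=0$), hence $\xx_k=0$ for all $k$, which forces $\BB\uu_k=\xx_{k+1}-\AA\xx_k=0$ and, since the time-$k$ output $\CC\xx_k+\DD\uu_k$ vanishes, also $\DD\uu_k=0$, i.e.\ $\uu_k=0$.

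For $(2)\Rightarrow(3)$, I would first pin down the meaning of the rank jump. From $\GGamma_N=\begin{bmatrix}\DD&0\\ \OO_{N-1}\BB&\GGamma_{N-1}\end{bmatrix}$ and the observation that prepending a zero block embeds $\ker\GGamma_{N-1}$ into $\ker\GGamma_N$ with image exactly $\{[\uu]_0^N\in\ker\GGamma_N:\uu_0=0\}$, one gets $\rank\GGamma_N-\rank\GGamma_{N-1}=m$ iff every $[\uu]_0^N\in\ker\GGamma_N$ has $\uu_0=0$; unpacking the block equations (which reduce to $\DD\uu_0=0$ and $\OO_{N-1}\BB\uu_0\in\img\GGamma_{N-1}$, the remaining coordinates then being solvable for), this is equivalent to $\ker\BB\cap\ker\DD=0$ together with $\BB(\ker\DD)\cap\weakuo^{N-1}=0$, where $\weakuo^{k}:=\OO_k^{-1}(\img\GGamma_k)$ is the $k$-step weakly unobservable subspace. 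The $\weakuo^{k}$ form a nonincreasing chain that stabilizes by step $n$ to $\weakuo(m)$ (a classical fact; cf.\ \cite{trentelman2002control}), and $\BB(\ker\DD)\subset\mathcal{T}(m)$ (take $N=1$ in the definition of $\mathcal{T}(m)$), so condition $(2)$ supplies both requirements at $N=n+1$ and $(3)$ holds. For the return arrow $(3)\Rightarrow(1)$: if the jump equals $m$ at some $N$, then any $[\uu]_0^N\in\ker\GGamma_N$ has $\uu_0=0$; applied to a sequence $\uu$ with $\response_0(\uu)=0$ this gives $\uu_0=0$, whence the shift property yields $\response_0(\sigma\uu)=\response(\BB\uu_0,\sigma\uu)=0$, and iterating gives $\uu=0$.

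Finally, for $(1)\Leftrightarrow(4)$, I would pass to the transfer matrix $\bm{G}(z):=\DD+\CC(z\II-\AA)^{-1}\BB$. The $z$-transform turns $\response_0$ into multiplication by $\bm{G}(z)$ and is injective on transforms of input sequences, so left invertibility is equivalent to $\bm{G}$ having full column rank over $\mathbb{R}(z)$, equivalently $\rank\bm{G}(z)=m$ for some (hence all but finitely many) $z\in\mathbb{C}$. A Schur-complement computation at any $z\notin\mathrm{spec}(\AA)$ gives $\rank\bm{R}(z)=n+\rank\bm{G}(z)$; since $\mathrm{spec}(\AA)$ is finite and $\rank\bm{R}(z)\le n+m$ always, the existence of a $z$ with $\rank\bm{R}(z)=n+m$ is exactly the assertion that $\bm{G}$ has normal rank $m$, which closes the chain. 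The step I expect to be most delicate is the finite-to-infinite-horizon passage behind $(2)\Rightarrow(3)$ (and implicitly behind $(1)\Leftrightarrow(4)$): that $\weakuo(m)$, defined via infinite-horizon inputs, is already captured by the finitely computable $\weakuo^{k}$ for $k\ge n$, and that the normal rank of a linear matrix pencil is attained off a finite subset of $\mathbb{C}$. Both are standard but load-bearing; everything else is routine bookkeeping with the block identities and the shift property.
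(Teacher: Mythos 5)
The paper does not prove this theorem at all: it is stated as a classical result and attributed to \cite{trentelman2002control, sain1969invertibility}, so there is no in-paper argument to compare against. Judged on its own, your proof is correct and follows the standard textbook route. The cycle $(1)\Rightarrow(2)\Rightarrow(3)\Rightarrow(1)$ is sound: the direct argument for $(2)\Rightarrow(1)$ (every state $\xx_k=\ctrb_k[\uu]_0^k$ reached under a zero-response input lies in $\weakuo(m)\cap\mathcal{T}(m)$, hence vanishes, forcing $\uu_k\in\ker\begin{bmatrix}\BB\\ \DD\end{bmatrix}$) is clean, and your identification of the rank jump $\rank\GGamma_N-\rank\GGamma_{N-1}=m$ with ``every kernel element of $\GGamma_N$ has vanishing first block'' is exactly right. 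The two facts you flag as load-bearing but unproved are indeed the only real content you import: (i) that $\OO_k^{-1}(\img\GGamma_k)$ stabilizes to $\weakuo(m)$ by $k=n$, which in this paper is precisely Lemma~1 together with Proposition~\ref{prop:finite} specialized to $s=m$ (where the arrangement is a single subspace and stabilization in $n$ steps follows from strict dimension decrease), and (ii) that the normal rank of $\bm{G}(z)$ and of the pencil $\bm{R}(z)$ is attained off a finite set, which is immediate from maximal minors being polynomials. The only place I would ask for slightly more care is the $z$-transform step in $(1)\Leftrightarrow(4)$: arbitrary input sequences need not have convergent transforms, so the argument should be phrased over formal Laurent series in $z^{-1}$ (full column rank over $\mathbb{R}(z)$ equals full column rank over $\mathbb{R}((z^{-1}))$, and a rational kernel vector can be normalized to a proper one, hence to an actual input sequence); as sketched this is recoverable, so I count it as a presentational gap rather than a mathematical one.
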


One may think of conditions (2-4) as providing \emph{geometric, rank based}, and \emph{spectral} characterizations of left invertibility: (2) indicates that no input is invisible to measurement results in a weakly unobservable perturbation to the state; (3) indicates that if an inverse exists, it can be implemented with some finite delay; and (4) can be used to show that the associated transfer function matrix admits a rational polynomial left inverse.

\subsection{Subspace Arrangements}
\label{sec:subspace-arrangements}
In the sparse recovery literature, one often considers the set of all vectors with a particular sparsity pattern. If we denote $\Delta_s := \{S \subset \range{m}: |S| \le s\}$, this set of vectors may be written as $\bigcup_{S \in \Delta_s} \spanof{S}$. Generally speaking, for any set $\Delta$ of subsets of $\range{m}$, we will write
\begin{equation}
    \spanof{\Delta} := \bigcup_{S \in \Delta} \spanof{S} \subset \mathbb{R}^m
\end{equation}
This object is an example of a \emph{finite subspace arrangement}:

\begin{definition}[Finite Subspace Arrangement]
    Let $\mathcal{A} \subset \mathbb{R}^n$. We call $\mathcal{A}$ a \emph{finite subspace arrangement in $\mathbb{R}^n$} if there exists an natural number $c$ and a collection of subspaces $(U_i)_{i \in \range{c}}, U_i \subset \mathbb{R}^n$, such that $\mathcal{A} = \bigcup_{i \in \range{c}} U_i$. We denote the smallest such $c$ as $c(\mathcal{A})$, and call this the \emph{size} of $\mathcal{A}$.
\end{definition}

For example, the set of $s$-sparse vectors $\spanof{\Delta_s}$ in $\mathbb{R}^m$ has size $\binom{m}{s}$. While this case only contains subspaces of dimension $s$, in general, subspace arrangements may contain subspaces of differing dimension.
\begin{definition}[Dimension Vector]
    Let $\mathcal{A} \subset \mathbb{R}^n$ be a finite subspace arrangement of size $c$ such that $\mathcal{A} = \bigcup_{i \in \range{c}} U_i$. Define $d_k(\mathcal{A}) = |\{i \in \range{c}: \dim U_i = k\}|$, and $\bm{d}(\mathcal{A}) = (d_1(\mathcal{A}), \ldots, d_n(\mathcal{A})) \in \Natn^n$. We refer to $\bm{d}(\mathcal{A})$ as the \emph{dimension vector} of $\mathcal{A}$.
\end{definition}

Note that for a linear subspace $\mathcal{X} \subset \mathbb{R}^n$, we have that $\bm{d}(\mathcal{X}) = \bm{e}_{\dim(\mathcal{X})} \in \mathbb{N}^n$. This provides a means of assessing the relative size of two subspace arrangements, in a similar fashion as simple dimension for subspaces:

\begin{definition}[Dimensional Order]
    Define $(\Natn^n, \preceq)$ the totally ordered set such that for $\bm{a}, \bm{b} \in \Natn^n$, $\bm{a} \preceq \bm{b}$ if for $j = \max\{i \in \range{n}: \bm{a}_i \ne \bm{b}_i\}, \bm{a}_j \le \bm{b}_j$, and $\bm{a} \prec \bm{b}$ analogously. 
    Given subspace arrangements $\mathcal{A}, \mathcal{B} \subset \mathbb{R}^n$, we define the \emph{dimensional order} $\preceq$ such that $\mathcal{A} \preceq \mathcal{B}$ if  $\bm{d}(\mathcal{A}) \preceq \bm{d}(\mathcal{B})$.
\end{definition}

Lastly, we note that if $\mathcal{U}, \mathcal{V} \subset \mathbb{R}^n$ are finite subspace arrangements and $\HH$ is a well-defined linear map, then $\mathcal{U} \times \mathcal{V}, \mathcal{U} \cup \mathcal{V}, \mathcal{U} \cap \mathcal{V}, \mathcal{U} + \mathcal{V}, \HH \mathcal{U}, \HH^{-1}\mathcal{U}$ are all finite subspace arrangements.

\section{Invertibility of Linear Systems with Sparse Inputs}\label{section:inv}
In this section, we will generalize the classical notion of left invertibility to systems with piecewise $s$-sparse inputs:
\begin{definition}[Left $\mathcal{U}$-Invertibility]\label{def:leftsp}
    Let $\mathcal{U}\subset (\mathbb{R}^m)^\Natn$.
     We say that $\Sigma$ is left $\mathcal{U}$-invertible if $\forall \uu, \uu' \in \mathcal{U}, \quad \response_0(\uu) = \response_0(\uu') \implies \uu = \uu'$. We will say that $\Sigma$ is \emph{left $s$-sparse invertible} when $\mathcal{U} = \{\uu: \Natn \to \spanof{\Delta_s}\}$.
\end{definition}

One can interpret this as a statement about the \emph{injectivity} of $\response_0$ when restricted to a given input class $\mathcal{U}$. The remainder of this section is dedicated to establishing analogous conditions for $s$-sparse invertibility to those in Theorem \ref{thm:leftinvlinear}, beginning with the characterization of sparse counterparts to the weakly unobservable and strongly reachable subspaces.

\subsection{Geometric Characterization}
The notion of weak unobservability generalizes immediately to the sparse setting:
\begin{definition}[Weakly Unobservable Point]
    Let $\xx \in \states$. If there exists $\uu: \Natn \to \inputs $ such that $\response(\xx, \uu) = 0$, then we call $\xx$ \emph{weakly $s$-sparse unobservable}, and denote by $\weakuo(s)$ the set of all such $\xx$.
\end{definition}

We will proceed to show that $\weakuo(s)$ is a finite subspace arrangement.
To compute $\weakuo(s)$, consider the following set mapping:
\begin{equation}\label{alg1}
    f_s(\mathcal{A}) = \mathcal{A} \cap \begin{bmatrix}
        \CC \\ \AA
    \end{bmatrix}^{-1} \left(0 \times \mathcal{A} + \begin{bmatrix}
        \DD \\ \BB
    \end{bmatrix}\inputs\right)
\end{equation}

Intuitively, $f_s(\mathcal{A})$ returns the set of $\xx \in \mathcal{A}$ such that there exists an $s$-sparse input $\uu$ satisfying $\AA\xx + \BB\uu \in \mathcal{A}$ and $\CC\xx + \DD\uu = 0$. Note that, by construction, for any $\mathcal{A} \subset \mathbb{R}^n$, $f_s(\mathcal{A}) \subset \mathcal{A}$; and if $f_s(\mathcal{A}) = \mathcal{A}$, $f_s^2(\mathcal{A}) := (f_s \circ f_s)(\mathcal{A}) = \mathcal{A}$.

\begin{lemma}
    $\forall k > 0$, $f_s^k(\mathbb{R}^n) = \OO_k^{-1}(\GGamma_k \inputs^k)$
\end{lemma}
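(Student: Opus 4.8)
The claim is an induction on $k$. The plan is to unpack the definition of $f_s$ and recognize that one application peels off a single "time step" of the response map, exactly as the block identity $\GGamma_N = \begin{bmatrix} \DD & 0 \\ \OO_{N-1}\BB & \GGamma_{N-1}\end{bmatrix}$ peels off the first output.

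\emph{Base case.} For $k = 1$, $f_s(\mathbb{R}^n) = \mathbb{R}^n \cap \begin{bmatrix}\CC\\\AA\end{bmatrix}^{-1}\!\left(0\times\mathbb{R}^n + \begin{bmatrix}\DD\\\BB\end{bmatrix}\inputs\right)$. A point $\xx$ lies in this set iff there is an $s$-sparse $\uu_0$ with $\CC\xx + \DD\uu_0 = 0$ and $\AA\xx + \BB\uu_0$ arbitrary (the second constraint is vacuous since the target is all of $\mathbb{R}^n$). That is precisely $\xx \in \OO_1^{-1}(\GGamma_1\inputs^1)$, since $\OO_1 = \CC$ and $\GGamma_1 = \DD$.

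\emph{Inductive step.} Assume $f_s^k(\mathbb{R}^n) = \OO_k^{-1}(\GGamma_k\inputs^k) =: \mathcal{A}_k$. I want to show $f_s(\mathcal{A}_k) = \OO_{k+1}^{-1}(\GGamma_{k+1}\inputs^{k+1})$. By definition, $\xx \in f_s(\mathcal{A}_k)$ iff $\xx \in \mathcal{A}_k$ and $\exists \uu_0 \in \inputs$ with $\CC\xx + \DD\uu_0 = 0$ and $\AA\xx + \BB\uu_0 \in \mathcal{A}_k$. Now $\xx \in \mathcal{A}_k$ is implied by the existence of such a $\uu_0$: unwind the membership $\AA\xx + \BB\uu_0 \in \mathcal{A}_k = \OO_k^{-1}(\GGamma_k\inputs^k)$ to get $\uu_1, \ldots, \uu_k \in \inputs$ with $\OO_k(\AA\xx + \BB\uu_0) = \GGamma_k[\uu]_1^k$; prepending the row $\CC\xx + \DD\uu_0 = 0$ and using $\OO_{k+1} = \begin{bmatrix}\CC \\ \OO_k\AA\end{bmatrix}$ together with the block identity for $\GGamma_{k+1}$ gives $\OO_{k+1}\xx = \GGamma_{k+1}[\uu]_0^{k+1}$ with all $\uu_i$ $s$-sparse, hence $\xx \in \OO_{k+1}^{-1}(\GGamma_{k+1}\inputs^{k+1})$ — and in particular, taking only the first $k$ output rows, $\xx \in \OO_k^{-1}(\GGamma_k\inputs^k) = \mathcal{A}_k$, so the redundant intersection with $\mathcal{A}_k$ in the definition of $f_s$ is automatically satisfied. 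The converse inclusion is the same computation read backwards: given $\xx$ with $\OO_{k+1}\xx = \GGamma_{k+1}[\uu]_0^{k+1}$, split off the top block row to get $\CC\xx + \DD\uu_0 = 0$ and the remaining $k$ rows to get $\OO_k(\AA\xx + \BB\uu_0) = \GGamma_k[\uu]_1^k$, i.e. $\AA\xx + \BB\uu_0 \in \mathcal{A}_k$, and also $\xx \in \mathcal{A}_k$ by dropping the last output row; hence $\xx \in f_s(\mathcal{A}_k)$.

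\emph{Main obstacle.} The only delicate point is making the block-matrix bookkeeping fully precise: one must verify that the shifted index set $[\uu]_1^k$ produced by unwinding $\AA\xx+\BB\uu_0 \in \mathcal{A}_k$ combines with $\uu_0$ to give a genuine element $[\uu]_0^{k+1}$ of $\inputs^{k+1}$ (each coordinate block independently $s$-sparse, which holds since $\inputs^{k+1} = \inputs \times \inputs^k$), and that the identity $\GGamma_{k+1} = \begin{bmatrix}\DD & 0 \\ \OO_k\BB & \GGamma_k\end{bmatrix}$ — which is the $M = 1$, $N = k+1$ case of the stated block decomposition — is applied with the correct alignment between $\OO_k\BB$ acting on $\uu_0$ and $\GGamma_k$ acting on $[\uu]_1^k$. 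Everything else is routine pre-image and subspace-arrangement manipulation, legitimate by the closure properties recorded at the end of Section~\ref{sec:subspace-arrangements}.
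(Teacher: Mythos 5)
Your proof is correct and follows essentially the same route as the paper's: induction on $k$ with base case $\OO_1^{-1}\GGamma_1\inputs$, peeling off one time step via the block identity $\GGamma_{k+1} = \left[\begin{smallmatrix}\DD & 0\\ \OO_k\BB & \GGamma_k\end{smallmatrix}\right]$, and using the nesting $\OO_{k+1}^{-1}(\GGamma_{k+1}\inputs^{k+1}) \subset \OO_k^{-1}(\GGamma_k\inputs^k)$ to absorb the intersection with $\mathcal{A}_k$ in the definition of $f_s$. The only difference is presentational: you argue element-wise where the paper manipulates the sets directly via preimage identities.
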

\begin{proof}
We first remark $\OO_{k+1}^{-1}(\GGamma_{k+1}\spanof{\Delta_s}^{k+1}) \subset \OO_k^{-1}(\GGamma_k\spanof{\Delta_s}^k)$, this is readily seen by considering that $\OO_{k+1}\xx \in \GGamma_{k+1}\spanof{\Delta_s}^{k+1} \implies \OO_{k}\xx \in \GGamma_{k}\spanof{\Delta_s}^{k}$.
    The remaining proof is by induction on $k$. For $k=1$, $f_s(\mathbb{R}^n) = \CC^{-1}\DD\inputs = \OO_1^{-1}\GGamma_1\inputs$. Now suppose $f_s^k(\mathbb{R}^n) = \OO_k^{-1}(\GGamma_k\inputs^k)$:
    \begin{align*}
        &\OO_{k+1}^{-1}(\GGamma_{k+1}\inputs^{k+1})\\
        &=\begin{bmatrix}
            \CC \\
            \OO_{k}\AA
        \end{bmatrix}^{-1}\left(
        \begin{bmatrix}
            \DD \\ \OO_k\BB
        \end{bmatrix}\inputs + \begin{bmatrix}
            0 \\ \GGamma_k
        \end{bmatrix}\inputs^k\right) \\
        &\overset{*}{=}\begin{bmatrix}
            \CC \\
            \AA
        \end{bmatrix}^{-1}\left(\begin{bmatrix}
            \bm{I} & 0 \\ 0 & \OO_k
        \end{bmatrix}^{-1}
        \begin{bmatrix}
            \DD \\ \OO_k\BB
        \end{bmatrix}\inputs\right. \\ &+ \left.\begin{bmatrix}
            \bm{I} & 0 \\ 0 & \OO_k
        \end{bmatrix}^{-1}\begin{bmatrix}
            0 \\ \GGamma_k
        \end{bmatrix}\inputs^k\right) \\
        &=\begin{bmatrix}
            \CC \\
            \AA
        \end{bmatrix}^{-1}\left(
        \begin{bmatrix}
            \DD \\ \BB
        \end{bmatrix}\inputs + 0 \times \OO_k^{-1}(\GGamma_k\inputs^k)\right) \\
        &= f_s(f_s^k(\mathbb{R}^n)) = f_s^{k+1}(\mathbb{R}^n)
    \end{align*}
    Where $*$ follows from the subspace identity $\AA^{-1}(\mathcal{B} + \mathcal{C}) = \AA^{-1}\mathcal{B} + \AA^{-1}\mathcal{C}$ when $\mathcal{B} \subset \img \AA$ and distributivity of preimage over unions, and the last step follows from the inductive hypothesis together with the fact that $\OO_{k+1}^{-1}(\GGamma_{k+1}\inputs^{k+1}) \subset \OO_{k}^{-1}(\GGamma_{k}\inputs^{k})$.
\end{proof}

\begin{proposition}\label{prop:finite}
    For every $s$, $\mathcal{V}(s)$ is a finite subspace arrangement. Furthermore, defining
    \begin{equation}
        \mathcal{V}_k(s) := \OO_k^{-1}(\GGamma_k \langle \Delta_s\rangle^k),
    \end{equation}
    there exists $N$ such that $\mathcal{V}_N(s) = \mathcal{V}_{N+1}(s) = \mathcal{V}(s)$. We define the \emph{weak $s$-sparse observability index} $\nu_s$ to be the smallest such $N$.
\end{proposition}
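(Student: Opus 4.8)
The plan is to exploit the nesting established in the preceding lemma, $\mathcal{V}_{k+1}(s) \subset \mathcal{V}_k(s)$, together with the fixed-point property of $f_s$. Since $f_s^k(\mathbb{R}^n) = \mathcal{V}_k(s)$ and each $\mathcal{V}_k(s)$ is a finite subspace arrangement (being built from $\mathbb{R}^n$ by finitely many applications of preimage, sum, intersection, and image with the arrangement $\langle \Delta_s \rangle$, all of which preserve finite subspace arrangements by the closure remark in Section \ref{sec:subspace-arrangements}), we obtain a descending chain $\mathbb{R}^n = \mathcal{V}_0(s) \supset \mathcal{V}_1(s) \supset \mathcal{V}_2(s) \supset \cdots$ of finite subspace arrangements.

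The first key step is to argue this chain stabilizes. The natural tool is the dimensional order $\preceq$: I would show that $\mathcal{A} \subsetneq \mathcal{B}$ with both finite subspace arrangements implies $\bm{d}(\mathcal{A}) \prec \bm{d}(\mathcal{B})$, or at least that a strictly descending chain of finite subspace arrangements in $\mathbb{R}^n$ cannot be infinite. One clean way: bound the complexity of $\mathcal{V}_k(s)$ uniformly. Each $\mathcal{V}_k(s)$ decomposes along support sequences: $\mathcal{V}_k(s) = \bigcup_{S \in \Delta_s^k} \OO_k^{-1}(\GGamma_S \langle S \rangle^k)$, but crucially, modulo the nesting relation, the relevant data at stage $k$ is which subspaces appear, and there are only finitely many subspaces of $\mathbb{R}^n$ realizable as $\OO_k^{-1}(\GGamma_S \langle S\rangle^k)$ as $k$ and $S$ vary — no wait, that set need not be finite. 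Better: use that a proper inclusion $\mathcal{V}_{k+1}(s) \subsetneq \mathcal{V}_k(s)$ forces a drop in $\bm{d}$ under $\preceq$, since some maximal-dimension component of $\mathcal{V}_k(s)$ must fail to be contained in $\mathcal{V}_{k+1}(s)$; because $\preceq$ is a well-order on the relevant finite range of dimension vectors (entries bounded by the total number of components, which I'd need to bound uniformly in $k$), the chain terminates. I expect this to be the main obstacle: getting a uniform bound on the size $c(\mathcal{V}_k(s))$ independent of $k$, or otherwise arguing termination without such a bound. A robust fallback is to observe that $\mathcal{V}_k(s) = f_s^k(\mathbb{R}^n)$ and that $f_s$ is monotone, so once $\mathcal{V}_{N+1}(s) = \mathcal{V}_N(s)$ the chain is constant; then termination follows if we can show $\bigcap_k \mathcal{V}_k(s)$ is itself one of the $\mathcal{V}_k(s)$, which again reduces to a finiteness/descending-chain argument on dimension vectors.

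Once stabilization is in hand — say $\mathcal{V}_N(s) = \mathcal{V}_{N+1}(s) =: \mathcal{V}_\infty$ — the second step is to identify $\mathcal{V}_\infty$ with $\mathcal{V}(s)$. For the inclusion $\mathcal{V}(s) \subset \mathcal{V}_k(s)$ for all $k$: if $\xx$ is weakly $s$-sparse unobservable via input $\uu$, then $\OO_k \xx = -\GGamma_k[\uu]_0^k \in \GGamma_k \langle \Delta_s\rangle^k$ since $\uu$ takes values in $\langle\Delta_s\rangle$, so $\xx \in \mathcal{V}_k(s)$; hence $\mathcal{V}(s) \subset \mathcal{V}_\infty$. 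For the reverse, $\mathcal{V}_\infty \subset \mathcal{V}(s)$: take $\xx \in \mathcal{V}_\infty$; since $f_s(\mathcal{V}_\infty) = f_s(f_s^N(\mathbb{R}^n)) = \mathcal{V}_{N+1}(s) = \mathcal{V}_\infty$, the fixed-point property gives an $s$-sparse $\uu_0$ with $\CC\xx + \DD\uu_0 = 0$ and $\AA\xx + \BB\uu_0 \in \mathcal{V}_\infty$. Iterating produces a sequence of states $\xx = \xx_0, \xx_1, \xx_2, \ldots$ all in $\mathcal{V}_\infty$ and $s$-sparse inputs $\uu_0, \uu_1, \ldots$ with $\xx_{k+1} = \AA\xx_k + \BB\uu_k$ and $\CC\xx_k + \DD\uu_k = 0$ for every $k$; by the defining identities for $\response$, this exhibits $\response(\xx, \uu) = 0$, so $\xx \in \mathcal{V}(s)$. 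Combining the two inclusions with $\mathcal{V}_\infty = \mathcal{V}_N(s) = \mathcal{V}_{N+1}(s)$ gives $\mathcal{V}_N(s) = \mathcal{V}_{N+1}(s) = \mathcal{V}(s)$, and $\mathcal{V}(s)$ is a finite subspace arrangement as a finite-stage iterate $f_s^N(\mathbb{R}^n)$. Defining $\nu_s$ as the least such $N$ is then immediate. The only real care needed beyond the termination argument is making the "iterate the fixed point" construction precise, i.e. checking the recursion genuinely produces a valid infinite input sequence and that the telescoping of the $\response$ identities indeed yields the zero response.
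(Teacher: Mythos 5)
Your skeleton matches the paper's: iterate $f_s$ from $\mathbb{R}^n$, show the descending chain $\mathcal{V}_k(s)$ stabilizes, and identify the stable value with $\mathcal{V}(s)$. Your second half is correct and complete: $\mathcal{V}(s) \subset \mathcal{V}_k(s)$ for all $k$ because $\OO_k\xx = -\GGamma_k[\uu]_0^k$, and conversely the fixed-point property of $\mathcal{V}_N(s) = \mathcal{V}_{N+1}(s)$ lets you peel off one $s$-sparse input at a time to build an infinite input with zero response. That is exactly the paper's argument for this direction, and finiteness of each iterate as a subspace arrangement follows from the closure properties as you say.

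The genuine gap is precisely where you flag it: termination of the strictly descending chain. You reduce it to the drop lemma $\mathcal{A} \subsetneq \mathcal{B} \Rightarrow \bm{d}(\mathcal{A}) \prec \bm{d}(\mathcal{B})$ plus well-foundedness of $\prec$, declare that the latter requires a uniform bound on $c(\mathcal{V}_k(s))$, admit you cannot produce one, and offer a fallback that "again reduces to a finiteness/descending-chain argument" --- i.e., the statement to be proved. This is the technical heart of the proposition and cannot be left open. Two ways to close it. First, the bound you are worried about is not actually needed: $(\Natn^n, \preceq)$ as defined (compare at the largest differing index) is a well-order of type $\omega^n$ --- in any strictly decreasing sequence the $n$th coordinate is non-increasing hence eventually constant, then the $(n-1)$st, and so on --- so strict descent of $\bm{d}(\mathcal{V}_k(s))$ alone forces stabilization, with no control on how large the entries get. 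Second, the paper's own route never bounds $c(\mathcal{V}_k(s))$ uniformly either; it uses only the multiplicative bound $c(\mathcal{V}_{k+1}(s)) \le c(\mathcal{V}_k(s))\binom{m}{s}$ and runs a nested induction on the smallest index $r$ with $d_r \ne 0$: each strict step removes a top-dimensional component at the cost of spawning boundedly many lower-dimensional ones, which the inductive hypothesis then eliminates in finitely many further steps. Either way, you (like the paper) also owe a short proof of the drop lemma itself for irredundant representations, resting on the fact that a subspace contained in a finite union of subspaces must lie in one of them; as written, your proposal asserts it without argument.
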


\begin{proof}
    Suppose that there exists $N$ such that $f_s^N(\mathbb{R}^n) = f_s^{N+1}(\mathbb{R}^n) = f_s(f_s^N(\mathbb{R}^n))$, then $\mathcal{V}^* := f_s^N(\mathbb{R}^n)$ is a fixed point of $f_s$. Hence, suppose $\xx \in \mathcal{V}^*$, it follows that there exists $\uu_0 \in \langle \Delta_s \rangle$ such that $\AA\xx + \BB\uu_0 \in \mathcal{V}^*$ and $\CC\xx + \DD\uu_0 = 0$, hence we we may construct $\uu: \mathbb{N} \to \langle \Delta_s \rangle$ such that $\gamma_{\xx}(\uu) = 0$, so $\xx \in \mathcal{V}(s)$.

    Fix $s$, and denote $\weakuo_k := f_s^k(\states) = \mathcal{V}_k(s)$.
    Consider that, if $\mathcal{A}, \mathcal{B}$ are subspace arrangments, $\mathcal{A} \subsetneq \mathcal{B} \implies d(\mathcal{A}) \prec d(\mathcal{B})$.
    Note that 
    $c(\weakuo_{k+1}) \le c(\weakuo_{k})C(m, s)$, as there are at most $c(\langle \Delta_s \rangle)$ subspaces in $\weakuo_{k+1}$ for every subspace in $\weakuo_k$. 

    Suppose that for some $k$, there exists $\mathcal{A}$ such that $\mathcal{V}_k \subset \mathcal{A}$ and $\mathcal{V}_k \ne \mathcal{V}_{k+1}$. Suppose that $f_s(\mathcal{A}) = \mathcal{A}$. We have that $\mathcal{A} \subset \mathbb{R}^n$, so $\mathcal{A} = f_s^k(\mathcal{A}) \subset \mathcal{V}_k \implies \mathcal{A} = \mathcal{V}_k$. But $\mathcal{V}_k \ne \mathcal{V}_{k+1}$, this is a contradiction, hence if $\mathcal{V}_k \ne \mathcal{V}_{k+1}$, $\weakuo_k \subset \mathcal{A} \implies f_s(\mathcal{A}) \subsetneq \mathcal{A}$, and therefore $d(f_{s}(\mathcal{A})) \prec d(\mathcal{A})$.

    By way of contradiction, suppose there does not exist $N$ such that $\weakuo_N = \weakuo_{N+1}$. Then the sequence $\weakuo_k$ is strictly decreasing by inclusion in $k$, and therefore strictly decreasing in dimension. By the above, $\weakuo_k \subset \mathcal{A} \implies d(f_s(\mathcal{A})) \prec d(\mathcal{A})$. We show a contradiction by induction on $\mu(\mathcal{V}_k) = \min\{i: d_i(\weakuo_k) \ne 0\}$.

    Suppose $\exists k, (d_1, \ldots, d_n)$ such that $d_1 \ne 0$ and $d(\weakuo_k) \preceq (d_1, \ldots, d_n)$. Since $f_s(\weakuo_k) \prec \weakuo_k$, $d(f_s^{d_1}(\weakuo_k)) = d(\weakuo_{k+d_1}) \preceq (0, d_2, \ldots, d_n)$.

    Now assume, by induction, that if $\exists k, \bm{d}$ such that $\bm{d} = (0, \ldots, 0, d_{r-1}, d_r, \ldots, d_n)$ and $d(\weakuo_k) \le \bm{d}$, that $\exists N$ such that $d(\weakuo_{k+N}) \preceq (0, \ldots, 0, d_r, d_{r+1}, \ldots, d_n)$. Suppose that $\exists k, \bm{d}$ such that $\bm{d} = (0, \ldots, 0, d_r, \ldots, d_n)$ and $d(\weakuo_k) \preceq \bm{d}$. Let $\mathcal{A}$ be such that $\weakuo_k \subset \mathcal{A}$, and $d(\mathcal{A}) = \bm{d}$. Then since $c(f(\mathcal{A})) \le c(\mathcal{A})c(\inputs)$, $d(\weakuo_{k+1}) \preceq d(f(\mathcal{A})) \preceq (0, \ldots, c(\mathcal{A})(c(\inputs)-  1) + 1, d_r^k-1, \ldots, d_n^k) = \bm{d}'$. Then there exists $k' = k+1, \bm{d}'$ such that $d(\weakuo_{k'}) \preceq \bm{d}'$ and $\bm{d}' = (0, \ldots, 0, d_{r-1}', d_r-1, d_{r+1}, \ldots, d_n)$. So, by the inductive hypothesis, there exists $N$ such that $d(\weakuo_{k' + N}) \preceq (0, \ldots, 0, d_r-1, d_{r+1}, \ldots, d_n)$. By repeated application of this fact, $\exists N'$ such that $d(\weakuo_{k' + N'}) \preceq (0, \ldots, 0, 0, d_{r+1}, \ldots, d_n)$.

    It follows by induction that as $d(\weakuo_0) = d(\mathbb{R}^n) = (0, \ldots, 1)$, there exists $N$ such that $d(\weakuo_N) \preceq 0$. But then $\weakuo_N = 0$, and so $\weakuo_{N+1} = \weakuo_N$, a contradiction.

    To see that the resulting subspace arrangement is finite, it suffices to note that the subspace arrangement is of size at most $\binom{m}{s}^{\nu_s}$.
\end{proof}

The set of strongly $s$-sparse reachable points likewise is readily generalized from the linear case:

\begin{definition}
    Let $\xx \in \mathbb{R}^n$. If there exists $\uu: \Natn \to \inputs$ and $N$ such that $\xx = \ctrb_N[\uu]_0^N$ and $\GGamma_N[\uu]_0^N = 0$, then $\xx$ is said to be \emph{strongly $s$-sparse reachable}. We denote the set of all such $\xx$ as $\mathcal{T}(s)$.
\end{definition}

We omit the proof for the following, as it follows from essentially the same argument as for $\mathcal{V}(s)$:

\begin{proposition}
    For every $s$, $\mathcal{T}(s)$ is a finite subspace arrangement. Furthermore, defining 
    \begin{equation}
        \mathcal{T}_k(s) := \ctrb_k(\ker \GGamma_k \cap \langle \Delta_s\rangle^k)
    \end{equation}
    there exists $N$ such that $\mathcal{T}_N(s) = \mathcal{T}_{N+1}(s) = \mathcal{T}(s)$. 
    We define the \emph{strong $s$-sparse reachability index} $\tau_s$ to be the smallest such $N$.
\end{proposition}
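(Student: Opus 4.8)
The plan is to mirror the argument used for $\weakuo(s)$ in Proposition \ref{prop:finite}, working with the complementary, ``forward'' set-valued map rather than the ``backward'' preimage map. Define
\begin{equation*}
    g_s(\mathcal{A}) = \begin{bmatrix} \AA & \BB \end{bmatrix}\left( \left(\mathcal{A} \times \inputs\right) \cap \ker \begin{bmatrix} \CC & \DD \end{bmatrix} \right),
\end{equation*}
so that intuitively $g_s(\mathcal{A})$ is the set of states $\AA\xx + \BB\uu$ reachable from some $\xx \in \mathcal{A}$ using an $s$-sparse input $\uu$ that produces zero output, i.e. $\CC\xx + \DD\uu = 0$. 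By the closure properties of finite subspace arrangements under products, intersections, images and unions (recorded at the end of Section \ref{sec:subspace-arrangements}), $g_s$ sends finite subspace arrangements to finite subspace arrangements, and by construction $\mathcal{A} \subset \mathcal{B} \implies g_s(\mathcal{A}) \subset g_s(\mathcal{B})$ and $g_s(\{0\}) \supset \{0\}$, so $g_s^k(\{0\})$ is an increasing chain.

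First I would prove the analogue of the Lemma: $g_s^k(\{0\}) = \ctrb_k(\ker\GGamma_k \cap \inputs^k) = \mathcal{T}_k(s)$ for all $k \ge 0$. This is an induction on $k$ using the block decomposition $\GGamma_{k+1} = \left[\begin{smallmatrix} \DD & 0 \\ \OO_k\BB & \GGamma_k \end{smallmatrix}\right]$ together with $\ctrb_{k+1} = \begin{bmatrix} \AA\ctrb_k & \BB \end{bmatrix}$: a tuple $[\uu]_0^{k+1} \in \inputs^{k+1}$ lies in $\ker\GGamma_{k+1}$ exactly when its tail $[\uu]_1^{k}$ lies in $\ker\GGamma_k\cap\inputs^k$ (so $\ctrb_k[\uu]_1^k \in \mathcal{T}_k(s)$ by hypothesis) and $\DD\uu_0 = 0$ with $\OO_k\BB\uu_0 + \GGamma_k[\uu]_1^k = 0$; carrying the state $\ctrb_k[\uu]_1^k$ one step forward through $\AA$ and adding $\BB\uu_0$ then reproduces $g_s$ applied to $\mathcal{T}_k(s)$, giving $\ctrb_{k+1}[\uu]_0^{k+1}$. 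As in the $\weakuo$ case one also notes the inclusion $\mathcal{T}_k(s) \subset \mathcal{T}_{k+1}(s)$ by padding an input tuple with a leading zero, which is what makes the chain monotone. Once this identity is in hand, $\mathcal{T}(s) = \bigcup_k \mathcal{T}_k(s)$ directly from the definition of strong $s$-sparse reachability, and a fixed point $g_s(\mathcal{T}^*) = \mathcal{T}^*$ translates into the ability to extend any reachable point indefinitely, so $\mathcal{T}^* \subset \mathcal{T}(s)$.

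Next I would establish stabilization of the increasing chain. The proof is dual to the decreasing-chain argument for $\weakuo_k$: where that argument used $\mathcal{A} \subsetneq \mathcal{B} \implies \bm{d}(\mathcal{A}) \prec \bm{d}(\mathcal{B})$ applied to a strictly shrinking chain bounded below by $\{0\}$, here I use the same strict monotonicity of $\bm{d}$ applied to a strictly growing chain, now bounded above by $\mathbb{R}^n$, i.e. $\bm{d}(\mathcal{T}_k(s)) \preceq \bm{d}(\mathbb{R}^n) = \bm{e}_n$ for all $k$. The size bound $c(g_s(\mathcal{A})) \le c(\mathcal{A})\,c(\inputs)$ is the analogue of $c(\weakuo_{k+1}) \le c(\weakuo_k)\,C(m,s)$ and feeds the same induction on $\mu$ (the index of the lowest nonzero entry of the dimension vector), run ``from the top down'' toward $\bm{e}_n$ instead of ``from the bottom up'' toward $0$; if the chain never stabilized it would be strictly increasing in dimensional order forever, contradicting the bound by $\bm{e}_n$. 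Finiteness of $\mathcal{T}(s)$ then follows from the size bound $c(\mathcal{T}(s)) \le \binom{m}{s}^{\tau_s}$, exactly as stated for $\weakuo(s)$.

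The one genuine subtlety — and what I expect to be the main obstacle in writing this cleanly — is handling the ``direction'' of the chain and the supremum argument precisely, since the $\weakuo$ proof is phrased in terms of an arrangement $\mathcal{A}$ sandwiched \emph{above} the iterates ($\weakuo_k \subset \mathcal{A}$) and derives $g_s(\mathcal{A}) \subsetneq \mathcal{A}$, whereas for $\mathcal{T}$ the natural sandwich is \emph{below} ($\mathcal{A} \subset \mathcal{T}_k(s)$, with $\mathcal{A} = g_s(\mathcal{A})$ a fixed point forcing $\mathcal{A} \subset \bigcap_k \mathcal{T}_k(s)$... no — forcing $\mathcal{A} \subset \mathcal{T}_0(s) = \{0\}$ fails since the chain now grows). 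The correct dual statement is: if $\mathcal{T}_k(s) \ne \mathcal{T}_{k+1}(s)$ and $\mathcal{A} \supset \mathcal{T}_k(s)$ with $g_s(\mathcal{A}) = \mathcal{A}$, then since $\mathcal{A}$ is a fixed point it contains $\mathcal{T}(s) \supsetneq \mathcal{T}_k(s)$, so in fact $\mathcal{A} \supsetneq \mathcal{T}_k(s)$ strictly, hence every arrangement $\mathcal{A}$ squeezed between $\mathcal{T}_k(s)$ and any proper-containment target must strictly grow under $g_s$; packaging this into the same $\mu$-induction as the original proof is where care is needed, but it is entirely mechanical once the roles of ``$0$'' and ``$\mathbb{R}^n$'' and of ``$\subset$'' and ``$\supset$'' are swapped consistently. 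Since the authors explicitly say the proof ``follows from essentially the same argument,'' I would present it at this level of detail and refer back to Proposition \ref{prop:finite} for the repeated bookkeeping.
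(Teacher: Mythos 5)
The paper gives no proof of this proposition (it is explicitly omitted as ``essentially the same argument'' as Proposition \ref{prop:finite}), so there is nothing to compare line by line; your skeleton --- the forward map $g_s$, which is exactly the recursion \eqref{alg2} of the ensuing corollary, the identity $g_s^k(\{0\}) = \mathcal{T}_k(s)$, and monotonicity of the chain --- is surely what the authors intend. But two things in your write-up do not work as stated. The smaller one is the induction step: you peel off the \emph{first} input via $\GGamma_{k+1} = \left[\begin{smallmatrix}\DD & 0\\ \OO_k\BB & \GGamma_k\end{smallmatrix}\right]$ while simultaneously requiring the tail $[\uu]_1^k$ to lie in $\ker\GGamma_k$; but $\ker\GGamma_{k+1}$ only forces $\OO_k\BB\uu_0 + \GGamma_k[\uu]_1^k = 0$, not $\GGamma_k[\uu]_1^k = 0$, so your characterization describes a strictly smaller set, and moreover $\AA\,\ctrb_k[\uu]_1^k + \BB\uu_0$ is not $\ctrb_{k+1}[\uu]_0^{k+1}$ (it applies the inputs in the wrong temporal order). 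The fix is to peel off the \emph{last} input, using the other block identity $\GGamma_{k+1} = \left[\begin{smallmatrix}\GGamma_k & 0\\ \CC\ctrb_k & \DD\end{smallmatrix}\right]$: then $[\uu]_0^{k+1}\in\ker\GGamma_{k+1}$ iff $[\uu]_0^k\in\ker\GGamma_k$ and $\CC\,\ctrb_k[\uu]_0^k + \DD\uu_k = 0$, and $\ctrb_{k+1}[\uu]_0^{k+1} = \AA\,\ctrb_k[\uu]_0^k + \BB\uu_k$, which is precisely $g_s$ applied to $\mathcal{T}_k(s)$. This is mechanical to repair.

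The serious gap is the stabilization step, which you dismiss as ``entirely mechanical once the roles of $\subset$ and $\supset$ are swapped.'' It is not, and the asymmetry is exactly the point. The termination argument for $\weakuo_k$ works because $(\Natn^n,\preceq)$ with the paper's colexicographic-style order is a well-order: strictly \emph{decreasing} sequences of dimension vectors must terminate. Strictly \emph{increasing} sequences bounded above need not: $(1,0,\ldots,0)\prec(2,0,\ldots,0)\prec(3,0,\ldots,0)\prec\cdots\prec\bm{e}_n$ is an infinite ascending chain below $\bm{e}_n$, so ``strictly increasing forever contradicts the bound by $\bm{e}_n$'' is false. Correspondingly, at the level of sets, a strictly increasing chain of finite subspace arrangements in $\mathbb{R}^n$ need not stabilize --- take $\mathcal{A}_k$ to be the union of $k$ distinct lines in $\mathbb{R}^2$ --- so boundedness above by $\mathbb{R}^n$ buys you nothing by itself. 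Any correct proof must use specific structure of the iteration $g_s$ (for instance, that along each fixed infinite support word the constituent subspaces form an ascending chain of \emph{subspaces}, whose dimensions are bounded by $n$, together with a uniformity argument over the finitely many branch maps $g_S$, $S\in\Delta_s$) rather than a formal dualization of the descent argument. As written, your proposal does not establish the existence of $N$ with $\mathcal{T}_N(s) = \mathcal{T}_{N+1}(s)$, which is the substantive content of the proposition.
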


It may be in turn shown that $\mathcal{T}(s)$ is obtained as the fixed point of an iterated set map, as in \eqref{alg1}

\begin{corollary}
    $\mathcal{T}_k(s)$ satisfies the recursion
    \begin{equation}\label{alg2}
        \mathcal{T}_{k+1}(s) = \begin{bmatrix}
            \AA & \BB
        \end{bmatrix}\left((\mathcal{T}_k(s) \times \spanof{\Delta_s})\cap \ker \begin{bmatrix}
            \CC & \DD
        \end{bmatrix}\right)
    \end{equation}
\end{corollary}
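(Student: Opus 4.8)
The plan is to derive the recursion directly from the block decompositions of $\GGamma$ and $\ctrb$ already recorded in the preliminaries, by isolating the constraint on the \emph{last} input coordinate. Taking $M=1$ in the block identity for $\GGamma_N$ gives $\GGamma_{k+1} = \begin{bmatrix}\GGamma_k & 0 \\ \CC\ctrb_k & \DD\end{bmatrix}$, and the reversed-power definition of the controllability matrix gives $\ctrb_{k+1} = \begin{bmatrix}\AA\ctrb_k & \BB\end{bmatrix}$. Writing a candidate input tuple $\uu \in \spanof{\Delta_s}^{k+1}$ as $\uu = (\uu', \uu_k)$ with $\uu' = [\uu]_0^k \in \spanof{\Delta_s}^k$ and last entry $\uu_k \in \spanof{\Delta_s}$, I would use these two decompositions to rewrite the condition $\GGamma_{k+1}\uu = 0$ as the conjunction of $\GGamma_k\uu' = 0$ and $\CC(\ctrb_k\uu') + \DD\uu_k = 0$, and to rewrite $\ctrb_{k+1}\uu$ as $\AA(\ctrb_k\uu') + \BB\uu_k$.

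The next step is to set $\zz := \ctrb_k\uu'$ and note that, as $\uu'$ ranges over $\spanof{\Delta_s}^k \cap \ker\GGamma_k$, the point $\zz$ ranges over exactly $\mathcal{T}_k(s)$ by definition. Combining this with the previous step gives the chain of equivalences: $\xx \in \mathcal{T}_{k+1}(s)$ iff there exist $\zz \in \mathcal{T}_k(s)$ and $\uu_k \in \spanof{\Delta_s}$ with $\CC\zz + \DD\uu_k = 0$ and $\xx = \AA\zz + \BB\uu_k$, iff $\xx \in \begin{bmatrix}\AA & \BB\end{bmatrix}\big((\mathcal{T}_k(s) \times \spanof{\Delta_s}) \cap \ker\begin{bmatrix}\CC & \DD\end{bmatrix}\big)$, which is the claimed identity. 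Spelled out as two inclusions: for ``$\subseteq$'' a witnessing $\uu$ for $\xx \in \mathcal{T}_{k+1}(s)$ produces the pair $(\ctrb_k[\uu]_0^k, \uu_k)$; for ``$\supseteq$'' a witnessing pair $(\zz, \uu_k)$ is lifted by picking any $\uu' \in \spanof{\Delta_s}^k \cap \ker\GGamma_k$ with $\ctrb_k\uu' = \zz$ and taking $\uu = (\uu', \uu_k)$.

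I anticipate no genuine difficulty beyond careful bookkeeping: the reversed ordering of powers in $\ctrb_k$ and the index shift between $[\uu]_0^k$ and $[\uu]_1^{k+1}$ make it tempting to peel off the \emph{first} input coordinate, but it is the \emph{last} coordinate --- the one multiplying $\BB$ in $\ctrb_{k+1}$ and $\DD$ in $\GGamma_{k+1}$ --- that makes both matrices decompose cleanly into their order-$k$ counterparts, so I would be careful to split $\uu$ that way (and to restrict to $k \ge 1$, the base case being immediate). The one conceptual point to get right is that the existential over the whole tail tuple $\uu'$ collapses to an existential over $\zz \in \mathcal{T}_k(s)$, since $\uu'$ influences the remaining constraints only through $\ctrb_k\uu'$; this is immediate from $\mathcal{T}_k(s) = \ctrb_k(\ker\GGamma_k \cap \spanof{\Delta_s}^k)$.
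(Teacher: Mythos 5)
Your proof is correct: the paper states this corollary without proof, and your argument---peeling off the last input coordinate via $\GGamma_{k+1} = \left[\begin{smallmatrix}\GGamma_k & 0 \\ \CC\ctrb_k & \DD\end{smallmatrix}\right]$ and $\ctrb_{k+1} = \left[\begin{smallmatrix}\AA\ctrb_k & \BB\end{smallmatrix}\right]$, then collapsing the existential over $\uu'$ to one over $\zz = \ctrb_k\uu' \in \mathcal{T}_k(s)$---is exactly the dual of the block-decomposition induction the paper uses for $\mathcal{V}_k(s)$ in its Lemma. You also correctly identify the one trap (last versus first coordinate) and handle it properly, so nothing is missing.
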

\vspace{1em}
Using these subspace arrangements, we may geometrically characterize left invertibility.

\begin{proposition}\label{propinv}
    The following are equivalent:
    \begin{enumerate}
        \item $\Sigma$ is left $s$-sparse invertible.
        \item $\mathcal{T}(2s) \cap \mathcal{V}(2s) = 0$ and $\forall S \in \Delta_{2s}, \ker \begin{bmatrix}
            \DD_S \\ \BB_S
        \end{bmatrix} = 0.$
        \item For any $S, T \in \Delta_{s}$, $\ker \DD_{S \cup T} \cap \BB^{-1}\mathcal{V}(2s) = 0$.
    \end{enumerate}
\end{proposition}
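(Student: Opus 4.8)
The plan is to first reduce $(1)$ to a cleaner equivalent and then prove the cycle $(1)\Rightarrow(2)\Rightarrow(3)\Rightarrow(1)$, each implication by contraposition. For the reduction, I would note that $\response_0$ is linear and that every sequence $\vv:\Natn\to\spanof{\Delta_{2s}}$ can be written $\vv=\uu-\uu'$ with $\uu,\uu':\Natn\to\spanof{\Delta_s}$, obtained at each time $k$ by partitioning $\supp(\vv_k)$ into two blocks of size at most $s$. Hence $\Sigma$ is left $s$-sparse invertible if and only if the only $\vv:\Natn\to\spanof{\Delta_{2s}}$ with $\response_0(\vv)=0$ is $\vv=0$, and I would work with this reformulation throughout. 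Two elementary consequences of the response identities will be used repeatedly: (a) if $\vv_0$ is $2s$-sparse with $\DD\vv_0=0$, then $\BB\vv_0=\ctrb_1[\vv]_0^1\in\mathcal{T}_1(2s)\subset\mathcal{T}(2s)$, via the one-term input $(\vv_0)$ in the definition of $\mathcal{T}(2s)$ (note $\GGamma_1[\vv]_0^1=\DD\vv_0=0$); and (b) if $\response_0(\vv)=0$ and $\vv_0\neq0$, then reading off the time-$0$ output gives $\DD\vv_0=0$, and since $\sigma(\response_0(\vv))=\response(\ctrb_1[\vv]_0^1,\sigma(\vv))=\response(\BB\vv_0,\sigma(\vv))$, the tail $\sigma(\vv)$ witnesses $\BB\vv_0\in\mathcal{V}(2s)$.

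To prove $(1)\Rightarrow(2)$, I would assume $(2)$ fails. If $\ker\begin{bmatrix}\DD_S\\\BB_S\end{bmatrix}\neq0$ for some $S\in\Delta_{2s}$, I take a nonzero $2s$-sparse $\vv_0$ with $\BB\vv_0=\DD\vv_0=0$ and feed it only at time $0$; the response is identically zero, contradicting $(1)$. Otherwise there is a nonzero $\xx\in\mathcal{T}(2s)\cap\mathcal{V}(2s)$; I then take a $2s$-sparse $\uu$ on $\range{N}$ with $\ctrb_N[\uu]_0^N=\xx$, $\GGamma_N[\uu]_0^N=0$, a $2s$-sparse $\ww$ witnessing $\response(\xx,\ww)=0$, and splice them into $\vv$ with $\vv_k=\uu_k$ for $k<N$ and $\vv_{N+j}=\ww_j$ for $j\ge0$; then $[\response_0(\vv)]_0^N=\GGamma_N[\uu]_0^N=0$ and $\sigma^N(\response_0(\vv))=\response(\ctrb_N[\uu]_0^N,\ww)=\response(\xx,\ww)=0$, while $\vv\neq0$ because $\xx\neq0$ forces $[\uu]_0^N\neq0$; again $\neg(1)$. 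To prove $(2)\Rightarrow(3)$, I would assume $(3)$ fails: there is a nonzero $\vv^{*}\in\ker\DD_{S\cup T}\cap\BB^{-1}\mathcal{V}(2s)$ for some $S,T\in\Delta_s$, so $\vv^{*}$ is $2s$-sparse with $\DD\vv^{*}=0$ and $\BB\vv^{*}\in\mathcal{V}(2s)$. If $\BB\vv^{*}=0$, then $\vv^{*}$ exhibits $\ker\begin{bmatrix}\DD_{S\cup T}\\\BB_{S\cup T}\end{bmatrix}\neq0$ with $S\cup T\in\Delta_{2s}$, contradicting the second conjunct of $(2)$; if $\BB\vv^{*}\neq0$, fact (a) gives $\BB\vv^{*}\in\mathcal{T}(2s)$, hence a nonzero point of $\mathcal{T}(2s)\cap\mathcal{V}(2s)$, contradicting the first conjunct.

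To prove $(3)\Rightarrow(1)$, I would assume $(1)$ fails, so there is a nonzero $\vv:\Natn\to\spanof{\Delta_{2s}}$ with $\response_0(\vv)=0$. Setting $k_0=\min\{k:\vv_k\neq0\}$, minimality gives $[\vv]_0^{k_0}=0$, so $\ctrb_{k_0}[\vv]_0^{k_0}=0$ and $\response_0(\sigma^{k_0}(\vv))=\sigma^{k_0}(\response_0(\vv))=0$; replacing $\vv$ by $\sigma^{k_0}(\vv)$, I may assume $\vv_0\neq0$. Fact (b) then gives $\DD\vv_0=0$ and $\BB\vv_0\in\mathcal{V}(2s)$, and writing $\supp(\vv_0)=S\cup T$ with $S,T\in\Delta_s$ (possible since $|\supp(\vv_0)|\le2s$) yields $0\neq\vv_0\in\ker\DD_{S\cup T}\cap\BB^{-1}\mathcal{V}(2s)$, contradicting $(3)$.

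The argument is conceptually light, so the main obstacle is disciplined bookkeeping: correctly handling the degenerate case $\BB\vv_0=0$ as a separate branch in both $(1)\Rightarrow(2)$ and $(2)\Rightarrow(3)$, performing the shift-to-first-nonzero reduction, and invoking the right response identities when splicing inputs — especially $\sigma^{k}(\response_0(\vv))=\response(\ctrb_k[\vv]_0^k,\sigma^k(\vv))$ and the block recursion for $\GGamma_N$. A small preliminary remark is also required, namely that $\mathcal{T}(2s)$ and $\mathcal{V}(2s)$ are finite subspace arrangements so that $(2)$ and $(3)$ are meaningful set equalities; this is Proposition \ref{prop:finite} and its strong-reachability analogue, applied with parameter $2s$.
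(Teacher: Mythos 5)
Your proof is correct and follows essentially the same route as the paper's: the same cycle $(1)\Rightarrow(2)\Rightarrow(3)\Rightarrow(1)$, the same splicing of a strongly-reachable prefix with a weakly-unobservable tail, the same use of $\mathcal{T}_1(2s)\subset\mathcal{T}(2s)$ for $(2)\Rightarrow(3)$, and the same peeling-off of the first input for $(3)\Rightarrow(1)$ (the paper phrases this as an induction over time steps rather than a shift to the first nonzero entry). Your explicit handling of the degenerate branch $\BB\vv^{*}=0$ in $(2)\Rightarrow(3)$ is a small tightening of the paper's argument, which leaves that case implicit.
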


\begin{proof}
    $(1 \Rightarrow 2)$ Suppose $\Sigma$ is left $s$-sparse invertible. Toward contradiction, take $S \in \Delta_{2s}$ such that $\ker \DD_S \cap \ker \BB_S \ne 0$, then there exists $T, T' \in \Delta_s$ such that $T \cup T' = S$ and $\uu: \Natn \to \spanof{T}, \vv: \Natn \to \spanof{T'}$ not equal such that $\forall k \in \Natn, \DD(\uu_k - \vv_k) = 0$ and $\BB(\uu_k - \vv_k) = 0$, so necessarily $\gamma_0(\uu - \vv) = 0 \implies \gamma_0(\uu) = \gamma_0(\vv)$, this is a contradiction. Now suppose instead that $\exists \xx \in \mathcal{T}(2s) \cap \mathcal{V}(2s)$, then there exists $\uu, \vv: \Natn \to \spanof{\Delta_s}$ and $N \in \Natn$ such that $\GGamma_N[\uu - \vv]_0^N = 0$ and $\ctrb_N[\uu - \vv]_0^N = \xx \in \mathcal{V}(2s)$, and $\response(\xx, \sigma^N(\uu - \vv)) = 0$.

    $(2 \Rightarrow 3)$ Suppose $\ker \DD_{S \cup T} \cap \BB^{-1}\mathcal{V}(2s) \ne 0$, then there exists $\uu_0 \in \spanof{S}, \vv_0 \in \spanof{T}$ such that $\DD(\uu_0 - \vv_0) = 0$ and $\BB(\uu_0 - \vv_0) \in \mathcal{V}(2s)$. But then $\BB(\uu_0 - \vv_0) \in \mathcal{T}_1(2s) \cap \mathcal{V}(2s) \subset  \mathcal{T}(2s) \cap \mathcal{V}(2s)$, contradicting $(2)$.

    $(3 \Rightarrow 1)$ Let $\uu, \vv: \Natn \to \spanof{\Delta_s}$, and suppose $\response_0(\uu) = \response_0(\vv)$. Then $\DD(\uu_0 - \vv_0) = 0$ and $\response(\BB\uu_0, \sigma(\uu)) = \response(\BB\vv_0, \sigma(\vv)) \iff \response(\BB(\uu_0 - \vv_0), \sigma(\uu - \vv)) = 0$, so $\uu_0 - \vv_0 \in \BB^{-1}\mathcal{V}(2s)$. So by $(3)$, $\uu_0 = \vv_0$. Hence we conclude $\Sigma$ is left $s$-sparse invertible.
\end{proof}

\subsection{Rank-Based Characterization}
While informative from a geometric perspective, it is not clear how one could approach the problem of actually building an inverse system from the geometric characterization. By considering the rank of $\GGamma$ when restricted to piecewise-$2s$-sparse supports, we show that if the system is $s$-sparse invertible, then it is possible to construct an inverse with a finite delay.

\begin{proposition}\label{rankprop}
    The system $\Sigma$ is $s$-sparse invertible if and only if there exists $N < \nu_{2s}$ s.t. for any $S = (S_0, S_1, \ldots, S_N) \in \Delta_{2s}^{N+1}$, 
    \begin{equation}\rank \GGamma_S - \rank\GGamma_{\sigma(S)} = |S_0|.
    \end{equation}
    In this event, we say $\Sigma$ is $s$-sparse invertible with delay $N$.
\end{proposition}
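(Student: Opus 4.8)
The plan is to route everything through the $2s$-sparse weakly unobservable arrangements $\weakuo_N(2s) = \OO_N^{-1}(\GGamma_N \dinputs^N)$ of Proposition~\ref{prop:finite}. Abbreviate $\mathcal{K}_N := \dinputs \cap \ker\DD \cap \BB^{-1}\weakuo_N(2s)$. I would establish three facts and then assemble them: (i) for a fixed $N$, the rank identity holds for every $S\in\Delta_{2s}^{N+1}$ if and only if $\mathcal{K}_N = 0$; (ii) $\Sigma$ is $s$-sparse invertible if and only if $\mathcal{K}_N = 0$ for some $N$, the relevant limiting case being $N = \nu_{2s}$; (iii) if $\mathcal{K}_N = 0$ ever holds, it already holds for some $N$ strictly below $\nu_{2s}$.

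For (i) I would expand $\GGamma_S$ using the block identity $\GGamma_{N+1} = \left[\begin{smallmatrix}\DD & 0\\ \OO_N\BB & \GGamma_N\end{smallmatrix}\right]$: restricting the block columns to $S=(S_0,\dots,S_N)$ produces a block lower-triangular matrix whose trailing block is exactly $\GGamma_{\sigma(S)}$, so a column-space count gives $\rank\GGamma_S - \rank\GGamma_{\sigma(S)} = |S_0| - \dim\bigl(\spanof{S_0}\cap\ker\DD\cap\BB^{-1}\OO_N^{-1}\img\GGamma_{\sigma(S)}\bigr)$. Hence the identity holds for every $S$ iff that intersection is trivial for every $S$; since $\bigcup_{(S_1,\dots,S_N)}\img\GGamma_{\sigma(S)} = \GGamma_N\dinputs^N$ and preimages distribute over unions, this is precisely $\mathcal{K}_N = 0$.

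For (ii), the ``if'' direction is an induction on the time index: given $s$-sparse $\uu,\uu'$ with $\response_0(\uu)=\response_0(\uu')$, the difference is piecewise $2s$-sparse and annihilated by every $\GGamma_M$, so reading off the top and bottom blocks of $\GGamma_{N+1}[\uu-\uu']_0^{N+1}=0$ forces $\uu_0-\uu_0'\in\mathcal{K}_N = 0$; the states then agree after one step, so shifting and repeating gives $\uu=\uu'$. For the converse, Proposition~\ref{propinv}(3) says $s$-sparse invertibility is equivalent to $\ker\DD_{S\cup T}\cap\BB^{-1}\weakuo(2s)=0$ for all $S,T\in\Delta_s$, i.e.\ to $\dinputs\cap\ker\DD\cap\BB^{-1}\weakuo(2s)=0$; since $\weakuo(2s)=\weakuo_{\nu_{2s}}(2s)$ by Proposition~\ref{prop:finite}, this is $\mathcal{K}_{\nu_{2s}}=0$.

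Ingredient (iii) is the delay bound and the place I expect the real difficulty. After (i)--(ii) we already know ``$\Sigma$ is $s$-sparse invertible $\iff$ the identity holds at level $\nu_{2s}$'', and the descending chain $\states=\weakuo_0(2s)\supsetneq\weakuo_1(2s)\supsetneq\cdots\supsetneq\weakuo_{\nu_{2s}}(2s)$ induces a monotone chain $\mathcal{K}_0\supseteq\mathcal{K}_1\supseteq\cdots$. I would first handle separately the degenerate case $\dinputs\cap\ker\left[\begin{smallmatrix}\DD\\\BB\end{smallmatrix}\right]\neq 0$, in which $\Sigma$ is never $s$-sparse invertible by Proposition~\ref{propinv}(2) and every $\mathcal{K}_N$ is nonzero, so both sides of the equivalence fail. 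In the remaining case the plan is to charge each index $N$ with $\mathcal{K}_N\neq 0$ against a step of the $\weakuo$-chain that has not yet stabilized: a nonzero element of $\mathcal{K}_N$ is the leading term of a $2s$-sparse, identically-zero-output input of length $N+1$, and extending or truncating such inputs relates consecutive $\mathcal{K}_N$'s through $\mathcal{T}_1(2s)$ and the fixed-point property of the set map $f_{2s}$ in \eqref{alg1}; invertibility ($\mathcal{T}(2s)\cap\weakuo(2s)=0$) should then pin down exactly how far the chain $\mathcal{K}_N$ can survive before collapsing. Making this precise --- tracking the indices carefully and controlling the interaction between the single prefix coordinate $u_0$ and the freedom in the tail so that the sharp index $\nu_{2s}$ (rather than a crude bound like $n$) emerges --- is the main obstacle; everything else is routine manipulation of preimages and block matrices.
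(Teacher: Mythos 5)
Your ingredients (i) and (ii) are precisely the paper's proof. The paper performs the same block-triangular expansion, writing $\rank \GGamma_S - \rank\GGamma_{\sigma(S)} = |S_0| - \dim \bigl[\begin{smallmatrix}\DD_{S_0}\\ \OO_N\BB_{S_0}\end{smallmatrix}\bigr]^{-1}\!(0\times\img\GGamma_{\sigma(S)})$, takes the union over supports to reduce the rank identity at level $N$ to the vanishing of $\dinputs\cap\ker\DD\cap\BB^{-1}\weakuo_N(2s)$ (your $\mathcal{K}_N$), invokes Proposition~\ref{propinv} for necessity via $\weakuo(2s)=\weakuo_{\nu_{2s}}(2s)$, and proves sufficiency by the same strong induction on the time index using $[\response_0(\uu-\uu')]_k^{N+1}=\OO_{N+1}\ctrb_k[\uu-\uu']_0^k+\GGamma_{N+1}[\uu-\uu']_k^{N+1}$. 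So the substance of your argument coincides with the paper's.

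Where you diverge is ingredient (iii), and there you are working harder than the paper does --- and, I suspect, harder than is possible. The paper's forward direction simply assumes the rank identity fails for \emph{every} $N$, specializes to $N=\nu_{2s}$, and contradicts invertibility; this establishes the identity at $N=\nu_{2s}$ itself, i.e.\ the bound $N\le\nu_{2s}$, and nothing in the paper's proof upgrades this to the strict inequality appearing in the statement. By your own monotonicity observation the chain $\mathcal{K}_0\supseteq\mathcal{K}_1\supseteq\cdots$ stabilizes at $\mathcal{K}_{\nu_{2s}}$, so what actually follows is ``invertible $\iff$ $\mathcal{K}_N=0$ for some $N\le\nu_{2s}$''; to obtain $N<\nu_{2s}$ one would need $\mathcal{K}_{\nu_{2s}}=0\Rightarrow\mathcal{K}_{\nu_{2s}-1}=0$, and since $\weakuo_{\nu_{2s}-1}(2s)\supsetneq\weakuo(2s)$ there is no structural obstruction to an invertible system admitting a nonzero $\bm{w}\in\dinputs\cap\ker\DD$ with $\BB\bm{w}\in\weakuo_{\nu_{2s}-1}(2s)\setminus\weakuo(2s)$. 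So the charging argument you sketch is not the missing step of the paper's proof: the paper supplies no such step, and the defensible form of the delay bound is $N\le\nu_{2s}$ (equivalently $d_s\le\nu_{2s}$). Treat (iii) as an off-by-one in the statement to be flagged, not as the crux of the proof to be filled in.
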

\begin{proof}
    Recall that for $S \in \Delta_{2s}^{N+1}$, $\sigma(S) = (S_1, \ldots, S_N) \in \Delta_{2s}^N$.
    
    $(\Rightarrow)$ Note that $\rank\GGamma_S = \rank\begin{bmatrix}
        \DD_{S_0} & 0 \\ \OO_{N-1}\BB_{S_0} & \GGamma_{\sigma(S)}
    \end{bmatrix}$ is equal to $|S_0| - \dim \begin{bmatrix}
        \DD_{S_0} \\\OO_N\BB_{S_0}
    \end{bmatrix}^{-1}(0 \times \img\GGamma_{\sigma(S)}) + \rank \GGamma_{\sigma(S)}$, so this characterization is equivalent to showing that $\begin{bmatrix}
        \DD_{S_0} \\ \OO_N\BB_{S_0}
    \end{bmatrix}^{-1}(0 \times \img \GGamma_{\sigma(S)}) = 0$. Suppose that for all $N \in \Natn$, there exists $S \in \Delta_{2s}^{N+1}$ such that $\begin{bmatrix}
        \DD_{S_0} \\ \OO_N\BB_{S_0}
    \end{bmatrix}^{\smash{-1}}(0 \times \img \GGamma_{\sigma(S)}) \ne 0$. Then there exists $\uu_0, \vv_0 \in \spanof{\Delta_s}$ such that $\DD(\uu_0 - \vv_0) = 0$, and $\BB(\uu_0 - \vv_0) \in \OO_{N}^{-1}(\GGamma_{N}\spanof{\Delta_{2s}}^{N})$. But then for $N \ge \nu_{2s}$, $\BB(\ker \DD \cap \dinputs) \cap \weakuo(2s) \ne 0$, contradicting invertibility. 

    $(\Leftarrow)$ Suppose that there exists $N$ such that for any $S \in \Delta_{2s}^{N+1}$, the rank condition holds. Then for any $\uu, \uu': \Natn \to \langle \Delta_s \rangle$, $\GGamma_{N+1}[\uu - \uu']_k^{N+1} = 0 \implies \uu_k = \uu'_k$. Suppose $\forall j < k, \uu_j = \uu'_j$. It follows that $[\gamma_0(\uu - \uu')]_k^{N+1} = 0 \implies \OO_{N+1}(\ctrb_k[\uu - \uu']_0^k) + \GGamma_{N+1}[\uu - \uu']_k^{N+1} = \GGamma_{N+1}[\uu - \uu']_k^{N+1} = 0 \implies \uu_k = \uu_k'$. It follows by  strong induction on $k$ that $\gamma_0(\uu) = \gamma_0(\uu') \implies \uu = \uu'$.
\end{proof}

In light of this result, we will define the \emph{inherent $s$-sparse delay} $d_s$ of the system as follows:
    \[d_s := \min\{N: \forall S \in \Delta_{2s}^{N+1}, \rank \GGamma_S - \rank\GGamma_{\sigma(S)} = |S_0|\}\]\,
By definition, if $d_s$ is finite, $d_s < \nu_{2s}$. As is the case with the inherent delay of linear systems with generic inputs, $d_s$ provides a lower bound on the delay of any $s$-sparse inversion algorithm. 

\subsection{Spectral Characterization}
The characterization of invertibility based on the Rosenbrock matrix is unique in its apparent simplicity, relying on no complicated block matrices or subspaces far removed from basic system parameters. Unfortunately, this simplicity prevents it from being able to capture the complex properties of changing input support patterns. To obtain a spectral characterization of left $s$-sparse invertibility, it is thus necessary to work with a version of the Rosenbrock matrix generalized to a pattern of $\tau$ supports $S \in \{T: T \subset \range{m}\}^\tau$:
\begin{equation}
    \bm{R}_S(z) := \begin{bmatrix}
        \AA^\tau - z\bm{I} & \ctrb_S \\ \OO_\tau & \GGamma_S
    \end{bmatrix}.
\end{equation}
Before addressing the general case, it is worth considering what the properties of this matrix can tell us about invertibility of the system over the set of $s$-piecewise sparse inputs with \emph{$\tau$-periodic} supports, that is:
\[
    \mathcal{U}_\tau(s) := \{\bm{u}: \Natn \to \spanof{\Delta_s}: \exists S \in \Delta_s^\tau, \uu_k \in \spanof{S_{k\text{ mod }\tau}}\}.
\]

\begin{lemma}
    If there exists $S \in \Delta_{2s}^\tau$ such that $\begin{bmatrix}
        \ctrb_S \\ \GGamma_S
    \end{bmatrix}$ is rank deficient, then $\Sigma$ is not left $\mathcal{U}_\tau(s)$-invertible.
\end{lemma}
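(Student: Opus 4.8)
The plan is to extract a nonzero kernel vector from the rank-deficient matrix, split it coordinatewise into the difference of two $s$-sparse $\tau$-periodic input sequences, and then use periodicity to propagate the vanishing of the zero-state response from the first $\tau$ time steps to all of $\Natn$.

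First I would use rank deficiency of $\begin{bmatrix} \ctrb_S \\ \GGamma_S \end{bmatrix}$ to produce a nonzero $\zz \in (\RR^m)^\tau$ with $\zz_i \in \spanof{S_i}$ for every $i \in \range{\tau}$ satisfying $\ctrb_\tau \zz = 0$ and $\GGamma_\tau \zz = 0$; this is exactly a nontrivial kernel vector of $\begin{bmatrix} \ctrb_S \\ \GGamma_S \end{bmatrix}$, since that matrix is the restriction of $\begin{bmatrix} \ctrb_\tau \\ \GGamma_\tau \end{bmatrix}$ to the columns indexed by $S$. Because $|S_i| \le 2s$, I would then partition each $S_i = T_i \sqcup T_i'$ with $|T_i| \le s$ and $|T_i'| \le s$, and write $\zz_i = \bm{a}_i - \bm{b}_i$ where $\bm{a}_i \in \spanof{T_i}$ is $\zz_i$ supported on $T_i$ and $\bm{b}_i \in \spanof{T_i'}$ is $-\zz_i$ supported on $T_i'$ (zero elsewhere). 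Letting $\uu$ and $\vv$ denote the $\tau$-periodic sequences whose first $\tau$ entries are $(\bm{a}_0,\dots,\bm{a}_{\tau-1})$ and $(\bm{b}_0,\dots,\bm{b}_{\tau-1})$, respectively, we have $\uu, \vv \in \mathcal{U}_\tau(s)$ (their support patterns $(T_i)_i$ and $(T_i')_i$ lie in $\Delta_s^\tau$), and $\uu \ne \vv$ because the disjointness of $T_i, T_i'$ forces $\uu = \vv$ to imply $\zz = 0$, contrary to assumption.

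Set $\bm{w} := \uu - \vv$, a $\tau$-periodic sequence with $[\bm{w}]_0^\tau = \zz$, so $\ctrb_\tau[\bm{w}]_0^\tau = 0$ and $\GGamma_\tau[\bm{w}]_0^\tau = 0$. Applying the shift identity $\sigma^k(\response(\xx_0,\uu)) = \response(\AA^k\xx_0 + \ctrb_k[\uu]_0^k, \sigma^k(\uu))$ with $k = \tau$, $\xx_0 = 0$, input $\bm{w}$, and using $\sigma^\tau(\bm{w}) = \bm{w}$, I obtain
\[
\sigma^\tau\big(\response_0(\bm{w})\big) = \response\big(\AA^\tau\cdot 0 + \ctrb_\tau[\bm{w}]_0^\tau,\ \sigma^\tau(\bm{w})\big) = \response(0,\bm{w}) = \response_0(\bm{w}),
\]
so $\response_0(\bm{w})$ is itself $\tau$-periodic. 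Since $[\response_0(\bm{w})]_0^\tau = \OO_\tau\cdot 0 + \GGamma_\tau[\bm{w}]_0^\tau = 0$, the $\tau$-periodicity of $\response_0(\bm{w})$ forces $\response_0(\bm{w}) = 0$, i.e.\ $\response_0(\uu) = \response_0(\vv)$. This exhibits distinct $\uu, \vv \in \mathcal{U}_\tau(s)$ with equal zero-state responses, so $\Sigma$ is not left $\mathcal{U}_\tau(s)$-invertible.

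The bookkeeping — identifying $\ctrb_S, \GGamma_S$ with column-restrictions of $\ctrb_\tau, \GGamma_\tau$, and splitting a $2s$-sparse vector into a difference of two $s$-sparse vectors — is routine. The one genuinely load-bearing step, which I would state with care, is the periodicity closure: a $\tau$-periodic input that drives the zero state back to the zero state after $\tau$ steps and yields zero output on $[0,\tau)$ must yield zero output for all time, which is precisely what the shift identity for $\response$ provides.
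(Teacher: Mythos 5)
Your proof is correct and follows essentially the same route as the paper: extract a nonzero kernel vector of $\begin{bmatrix}\ctrb_S \\ \GGamma_S\end{bmatrix}$, split it into a difference of two $s$-sparse admissible inputs, and use the shift identity for $\response$ to propagate vanishing of the response beyond the first $\tau$ steps. The only (immaterial) difference is that the paper extends the inputs by zero after time $\tau$, so the tail response is immediately $\response(0,0)=0$, whereas you extend periodically and then invoke a periodicity/fixed-point argument on $\response_0(\bm{w})$ --- both are valid since the state returns to $0$ at time $\tau$.
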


\begin{proof}
    Suppose that for some $S$, $\begin{bmatrix}
        \ctrb_S \\ \GGamma_S
    \end{bmatrix}$ is not full rank. Then there exists $\uu, \uu' \in \mathcal{U}_\tau(s)$, $\forall k \ge \tau, \uu_k = \uu'_k = 0$, but $\uu \ne \uu'$ such that $\ctrb_\tau[\uu - \uu']_0^\tau = 0$ and $\GGamma_\tau[\uu - \uu']_0^\tau = 0$. Then $[\response_0(\uu - \uu')]_0^\tau = 0$, and $\sigma^\tau(\response_0(\uu - \uu')) = \response(\ctrb_\tau[\uu-\uu']_0^\tau, \sigma^\tau(\uu - \uu')) = 0$, so $\response(\uu) = \response(\uu')$.
    Therefore, $\Sigma$ is not left $\mathcal{U}_\tau(s)$-invertible.
\end{proof}

\begin{proposition}
    $\Sigma$ is left $\mathcal{U}_\tau(s)$-invertible if and only if for any $S \in \Delta_{2s}^\tau$, there exists $z \in \mathbb{C}$ such that $\rank \bm{R}_S(z) = n + \sum_{i \in \range{\tau}} |S_i|$.
\end{proposition}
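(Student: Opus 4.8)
The plan is to reduce the statement to the classical spectral criterion of Theorem~\ref{thm:leftinvlinear} (the equivalence $(1)\Leftrightarrow(4)$) applied to a family of \emph{lifted} systems obtained by running $\Sigma$ for $\tau$ steps at a time while freezing the support pattern. Fix $S\in\Delta_{2s}^\tau$ and put $m_S:=\sum_{i\in\range{\tau}}|S_i|$. Consider the linear system $\Sigma_S:=(\AA^\tau,\ \ctrb_S,\ \OO_\tau,\ \GGamma_S)$, which has state space $\states$, input space $\mathbb{R}^{m_S}$, and Rosenbrock matrix exactly $\bm{R}_S(z)$, a pencil with $n+m_S$ columns. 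Theorem~\ref{thm:leftinvlinear} applied to $\Sigma_S$ says that $\Sigma_S$ is left invertible if and only if there exists $z$ with $\rank\bm{R}_S(z)=n+m_S$. So it suffices to prove: $\Sigma$ is left $\mathcal{U}_\tau(s)$-invertible if and only if $\Sigma_S$ is left invertible for every $S\in\Delta_{2s}^\tau$.

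I would first establish the bridge between the two settings. Any $\bm{w}:\Natn\to\mathbb{R}^{m_S}$, written blockwise as $\bm{w}_j=((\bm{w}_j)^{(0)},\dots,(\bm{w}_j)^{(\tau-1)})$ with $(\bm{w}_j)^{(i)}\in\mathbb{R}^{|S_i|}$, unfolds to a unique $\vv:\Natn\to\spanof{\Delta_{2s}}$ with $\tau$-periodic support pattern $S$, namely the one whose restriction of $\vv_{j\tau+i}$ to the coordinates $S_i$ equals $(\bm{w}_j)^{(i)}$ (and is zero elsewhere); conversely every $\vv$ with periodic pattern $S$ arises this way. Using the column-selection identities $\GGamma_\tau[\vv]_{j\tau}^\tau=\GGamma_S\bm{w}_j$ and $\ctrb_\tau[\vv]_{j\tau}^\tau=\ctrb_S\bm{w}_j$, together with the shift/semigroup property of $\response$ (so that, by induction on $j$, the state of $\Sigma$ under $\vv$ at time $j\tau$ coincides with the state of $\Sigma_S$ under $\bm{w}$ at time $j$), one obtains $[\response_0(\vv)]_{j\tau}^\tau=\OO_\tau\xx_j+\GGamma_S\bm{w}_j$ for all $j$, i.e.\ the zero-state response of $\Sigma_S$ to $\bm{w}$, re-concatenated over all $j$, is precisely $\response_0(\vv)$. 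Since the unfolding is a bijection, $\response_0(\vv)=0$ iff the zero-state response of $\Sigma_S$ to $\bm{w}$ vanishes; combined with linearity this shows that $\Sigma_S$ is left invertible if and only if every $\vv$ with periodic support pattern $S$ and $\response_0(\vv)=0$ is zero.

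It then remains to glue over $S\in\Delta_{2s}^\tau$, which is the same $2s$-versus-$s$ argument already used in Proposition~\ref{propinv}. For the ``if'' direction: if all $\Sigma_S$ are left invertible and $\uu,\uu'\in\mathcal{U}_\tau(s)$ satisfy $\response_0(\uu)=\response_0(\uu')$, then $\vv:=\uu-\uu'$ has periodic support pattern $S:=(T_i\cup T_i')_i\in\Delta_{2s}^\tau$, where $T,T'\in\Delta_s^\tau$ are the patterns of $\uu,\uu'$, and $\response_0(\vv)=0$ by linearity, so $\vv=0$ by the previous paragraph. For the ``only if'' direction: if some $\Sigma_S$ is not left invertible, take $\vv\neq0$ with periodic pattern $S$ and $\response_0(\vv)=0$, split each $S_i$ as a disjoint union $T_i\sqcup R_i$ with $|T_i|,|R_i|\le s$ (possible since $|S_i|\le 2s$), and let $\uu,\uu'\in\mathcal{U}_\tau(s)$ be obtained from $\vv$ by keeping only the coordinates in $T_{k\bmod\tau}$ at each time $k$, respectively by negating $\vv$ and keeping only the coordinates in $R_{k\bmod\tau}$; then $\uu-\uu'=\vv\neq0$ while $\response_0(\uu)=\response_0(\uu')$, so $\Sigma$ is not left $\mathcal{U}_\tau(s)$-invertible.

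I expect the only real work to be bookkeeping: matching the block column-selection notation $\ctrb_S,\GGamma_S$ with ``restrict $[\vv]_{j\tau}^\tau$ to its support,'' and verifying via the shift identity that sampling the state of $\Sigma$ every $\tau$ steps genuinely produces the state trajectory of $\Sigma_S$. Once that is in place the reduction to Theorem~\ref{thm:leftinvlinear} is immediate. A small point worth flagging is that left invertibility of $\Sigma_S$ quantifies over \emph{all} input sequences in $(\mathbb{R}^{m_S})^{\Natn}$, not merely finitely supported ones, which is exactly the class the unfolding correspondence ranges over, so no additional truncation or density argument is needed.
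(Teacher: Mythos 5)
Your proposal is correct and follows essentially the same route as the paper: both reduce the claim to the classical spectral criterion of Theorem~\ref{thm:leftinvlinear} applied to the lifted system $\Sigma_S=(\AA^\tau,\ctrb_S,\OO_\tau,\GGamma_S)$, whose Rosenbrock matrix is $\bm{R}_S(z)$, and both handle the $2s$-versus-$s$ gap by splitting each $S_i$ into two supports of size at most $s$. Your write-up is somewhat more explicit than the paper's about why the $\tau$-step block unfolding is a response-preserving bijection, but the underlying argument is the same.
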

\begin{proof}
    $(\Rightarrow)$ Suppose that there exists $S \in \Delta_{2s}^\tau$ such that for any $z \in \mathbb{C}$, $\bm{R}_S(z)$ is rank deficient. Denote  the LTI system $\Sigma_S := (\AA^\tau, \ctrb_S, \OO_\tau, \GGamma_S)$, then $\bm{R}_S$ is the Rosenbrock matrix of this system. Hence, by theorem 1, $\Sigma_S$ is not left invertible, so there exists $\vv, \vv': \Natn \to \mathbb{R}^{|S|}$ not equal such that this system's response $\response^{(\Sigma_S)}$ satisfies     $\response_0^{\smash{\smash{{(\Sigma_S)}}}}(\vv - \vv') = 0$. Choose $T, T' \subset \Delta_s^\tau$ such that $T \cup T' = S$, and define $\uu, \uu': \Natn \to \spanof{\Delta_s}$ such that $\uu_k \in \spanof{T_{k \text{ mod } \tau}}$, $\uu'_k \in \spanof{T'_{k \text{ mod } \tau}}$, and $(\uu_k - \uu_k')_{S_{k \text{ mod } \tau}} = \vv_k - \vv_k'$, then $\uu \ne \uu'$ and $\gamma_0(\uu - \uu') = 0$. It follows that $\Sigma$ is not left $\mathcal{U}_\tau(s)$-invertible.

    ($\Leftarrow$) Suppose $\Sigma$ is not left $\mathcal{U}_\tau(s)$-invertible, then there exists $\uu, \uu' \in \mathcal{U}_\tau(s)$ distinct such that $\gamma_0(\uu - \uu') = 0$. Denote $S, S' \in \Delta_s^\tau$ such that $[\uu]_{\tau k}^\tau \in \spanof{S}, [\uu']_{\tau k}^\tau \in \spanof{S'}$, it follows that there exists $\bm{w}, \bm{w}': \Natn \to \mathbb{R}^{|S \cup S'|}$ not equal such that, denoting $\response^{(S \cup S')}$ the response of the system $\Sigma_{S \cup S'} := (\AA^\tau, \ctrb_{S \cup S'}, \OO_\tau, \GGamma_{S \cup S'})$, $\response^{(S \cup S')}_0(\bm{w}) = \response^{(S \cup S')}_0(\bm{w}')$, so $\Sigma_{S \cup S'}$ is not invertible. It follows that for all $z \in \mathbb{C}$, $\bm{R}_{S \cup S'}(z)$ is rank deficient.
\end{proof}

In particular, we obtain a necessary and sufficient characterization of invertibility with respect to inputs with constant support:
\begin{corollary}
    $\Sigma$ is left $\mathcal{U}_1(s)$-invertible if and only if $\forall S \in \Delta_{2s}$, \begin{equation}
        \rank \begin{bmatrix}
            \AA - z\bm{I} & \BB_S \\ \CC & \DD_S
        \end{bmatrix} = n + |S|.
    \end{equation}
\end{corollary}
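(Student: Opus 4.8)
The plan is simple: this corollary is exactly the $\tau = 1$ instance of the proposition immediately preceding it, so the whole argument amounts to substituting $\tau = 1$ into the definitions of $\mathcal{U}_\tau(s)$ and of the generalized Rosenbrock pencil $\bm{R}_S(z)$ and checking that everything collapses to the objects in the statement. First I would record the $\tau = 1$ reductions of the block matrices: $\ctrb_1 = \BB$, $\OO_1 = \CC$, and $\GGamma_1 = \DD$. Hence, identifying a length-one tuple $(S) \in \Delta_{2s}^1$ with the set $S \in \Delta_{2s}$, one has $\ctrb_S = \BB_S$ and $\GGamma_S = \DD_S$, so that
\[
    \bm{R}_S(z) = \begin{bmatrix} \AA^1 - z\bm{I} & \ctrb_S \\ \OO_1 & \GGamma_S \end{bmatrix} = \begin{bmatrix} \AA - z\bm{I} & \BB_S \\ \CC & \DD_S \end{bmatrix},
\]
and $\sum_{i \in \range{1}} |S_i| = |S|$. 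Likewise $\mathcal{U}_1(s) = \{\uu : \Natn \to \spanof{\Delta_s} : \exists\, S \in \Delta_s,\ \forall k,\ \uu_k \in \spanof{S}\}$ is precisely the class of inputs with a single, time-invariant $s$-sparse support.

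Next I would invoke the preceding proposition at $\tau = 1$: it asserts that $\Sigma$ is left $\mathcal{U}_1(s)$-invertible if and only if for every $S \in \Delta_{2s}$ there exists $z \in \mathbb{C}$ with $\rank \bm{R}_S(z) = n + |S|$, which by the reductions above is exactly the claimed rank condition on $\begin{bmatrix} \AA - z\bm{I} & \BB_S \\ \CC & \DD_S \end{bmatrix}$. That closes the argument; no new idea is needed beyond this bookkeeping.

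The only thing to be careful about — and it is genuinely minor — is the identification of the tuple spaces $\Delta_s^1$ and $\Delta_{2s}^1$ with $\Delta_s$ and $\Delta_{2s}$, so that the index $k \bmod 1 = 0$ makes the support $S_{k \bmod 1}$ unambiguously equal to the single set $S$; once that convention is pinned down, both the input class and the pencil reduce verbatim. If a fully self-contained derivation were preferred, the alternative is to argue directly, exactly as in the proof of the preceding proposition but with $\tau = 1$: apply Theorem \ref{thm:leftinvlinear} to each LTI subsystem $(\AA, \BB_S, \CC, \DD_S)$ for $S \in \Delta_{2s}$, observe that a pair of distinct $\mathcal{U}_1(s)$-inputs with equal zero-state responses is equivalent to the failure of left invertibility of one such subsystem (splitting the union of supports $T \cup T' = S$ as in Proposition \ref{propinv}), and then read off the spectral condition from clause (4) of Theorem \ref{thm:leftinvlinear}. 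I do not expect any real obstacle here.
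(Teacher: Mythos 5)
Your proof is correct and takes exactly the route the paper intends: the corollary appears without proof precisely because it is the $\tau = 1$ specialization of the preceding proposition, and your bookkeeping ($\ctrb_1 = \BB$, $\OO_1 = \CC$, $\GGamma_1 = \DD$, identification of $\Delta_{2s}^1$ with $\Delta_{2s}$, and the constant-support reading of $\mathcal{U}_1(s)$) is all that is needed. Your implicit reading of the statement as asserting the existence of some $z \in \mathbb{C}$ achieving the rank (matching the quantifier in the proposition) is also the correct interpretation of the corollary as written.
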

\vspace{1em}
It is probably clear that, for a system $\Sigma$ to be left $s$-sparse invertible, it must be left $\mathcal{U}_\tau(s)$ invertible for all $\tau$. However, there is no guarantee that generic $s$-piecewise sparse inputs will have periodic supports. Our final result shows that we may bound the required $\tau$ to check, by considering the size of the strongly reachable subspace arrangement $c(\mathcal{T}(2s))$.

\begin{proposition}\label{spectralcond}
    Suppose $\mathcal{T}(2s) = \bigcup_{i \in I} V_i$, and let $\PPsi_i$ be a basis for $V_i$. Then $\Sigma$ is left $s$-sparse invertible if and only if $\forall i \in I, \forall \tau \le c(\mathcal{T}(2s))$, $\forall S \in \Delta_{2s}^\tau, \forall z \in \mathbb{C}$, 
    \begin{equation}
        \rank \begin{bmatrix}
            (\AA^\tau - z\bm{I})\PPsi_i & \ctrb_S \\ \OO_\tau\PPsi_i & \GGamma_S
        \end{bmatrix} = \dim V_i + \sum_{k \in \range{\tau}} |S_k|.
    \end{equation}
\end{proposition}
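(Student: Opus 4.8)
The plan is to read the displayed rank condition dynamically and then appeal to the geometric characterization in Proposition~\ref{propinv}. The matrix in the statement has $\dim V_i+\sum_{k\in\range{\tau}}|S_k|$ columns, so it fails to have full column rank at a value $z$ precisely when it has a nonzero kernel vector $(\bm{a},\vv)$; setting $\yy:=\PPsi_i\bm{a}\in V_i$ and letting $[\uu]_0^\tau$ be the block carried by $S$ with compressed coordinates $\vv$ (so $\ctrb_S\vv=\ctrb_\tau[\uu]_0^\tau$ and $\GGamma_S\vv=\GGamma_\tau[\uu]_0^\tau$), this is the same as the existence of a nonzero pair $(\yy,[\uu]_0^\tau)$ with $\yy\in V_i$, $[\uu]_0^\tau$ supported on $S$, $\OO_\tau\yy+\GGamma_\tau[\uu]_0^\tau=0$ and $\AA^\tau\yy+\ctrb_\tau[\uu]_0^\tau=z\yy$ --- in words, a length-$\tau$, $2s$-sparse, output-nulling trajectory that starts at a point $\yy$ of $V_i\subset\mathcal{T}(2s)$ and returns to the scaled point $z\yy$; the degenerate subcase $\yy=0$ is exactly rank deficiency of $\begin{bmatrix}\ctrb_S\\\GGamma_S\end{bmatrix}$. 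Thus the Proposition asserts that left $s$-sparse invertibility is equivalent to the nonexistence of such a trajectory for every $i$, every $\tau\le c(\mathcal{T}(2s))$ and every $S\in\Delta_{2s}^\tau$.

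For the forward implication I would prove the contrapositive and show that any such trajectory --- for an arbitrary $\tau$ --- already destroys invertibility. If $\yy=0$, pick a real null vector of $\begin{bmatrix}\ctrb_S\\\GGamma_S\end{bmatrix}$, split the corresponding nonzero $2s$-sparse finite input into a difference $[\uu^{+}]_0^\tau-[\uu^{-}]_0^\tau$ of $s$-sparse inputs, and extend both by zeros: then $\uu^{+}\neq\uu^{-}$ but $\response_0(\uu^{+})=\response_0(\uu^{-})$, so $\Sigma$ is not left $s$-sparse invertible. If $\yy\neq0$, extend the trajectory geometrically by feeding $z^{j}\uu_r$ at time $j\tau+r$; by linearity of $\response$ the output is identically zero while the state at time $j\tau$ is $z^{j}\yy$, so (scaling preserves support) $\yy$ lies in the complexification of the arrangement $\mathcal{T}(2s)\cap\weakuo(2s)$; being nonzero there forces $\mathcal{T}(2s)\cap\weakuo(2s)\neq0$, and Proposition~\ref{propinv}(2) fails. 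No bound on $\tau$ is used in this direction.

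For the converse, assume all the rank conditions (with $\tau\le c(\mathcal{T}(2s))$) and suppose $\Sigma$ is not left $s$-sparse invertible; by Proposition~\ref{propinv}(2) either $\ker\begin{bmatrix}\DD_S\\\BB_S\end{bmatrix}\neq0$ for some $S\in\Delta_{2s}$, or $\WW:=\mathcal{T}(2s)\cap\weakuo(2s)\neq0$. In the first case $\tau=1$, any $i$, $\bm{a}=0$ and $\vv\in\ker\begin{bmatrix}\DD_S\\\BB_S\end{bmatrix}\setminus 0$ already violate the rank condition (note $1\le c(\mathcal{T}(2s))$ since $0\in\mathcal{T}(2s)$). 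In the second case I must produce a trajectory of the above kind through some $V_i$ with $\tau\le c(\mathcal{T}(2s))$. The tools are: (i) $\WW$ is invariant under one step of $2s$-sparse output-nulling dynamics in both time directions --- forward because every point of $\weakuo(2s)$ admits a $2s$-sparse output-nulling step back into $\weakuo(2s)$ (the fixed-point property underlying Proposition~\ref{prop:finite}/\eqref{alg1}), backward because $\mathcal{T}(2s)$ is a fixed point of the map of \eqref{alg2}; (ii) since a subspace contained in a finite union of subspaces lies in one of them, every piece of the arrangement $\WW$ sits inside a single piece $V_i$ of $\mathcal{T}(2s)$, and the same fact applied to forward invariance lets one arrange that each such piece steps \emph{entirely} (via one fixed support $T\in\Delta_{2s}$) into a single next piece of $\WW$. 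Iterating this ``successor'' assignment over the finitely many pieces gives a cycle $i_1\to\cdots\to i_\ell\to i_1$ with a support pattern $S\in\Delta_{2s}^\ell$; composing the corresponding total linear relations around the cycle yields a total linear relation from a nonzero subspace of $V_{i_1}$ to itself, whose linear section has a (possibly complex) eigenvector $\yy\neq0$ with eigenvalue $z$, realizing the sought trajectory of period $\tau=\ell$ --- contradicting the rank condition.

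The main obstacle is precisely controlling the period so that $\ell\le c(\mathcal{T}(2s))$ rather than merely $\ell\le c(\WW)$: a trajectory inside $\WW$ visits the $c(\mathcal{T}(2s))$ subspaces $V_i$ in some sequence, but a repeated index in a cycle yields only a shorter \emph{path}, not a shorter \emph{loop}, and the ``step entirely into a single piece'' relations chain cleanly only through the pieces of $\WW$, whose number may exceed $c(\mathcal{T}(2s))$. Reconciling the period bound with the requirement that the composed relation be total on a nonzero subspace of one $V_i$ --- by choosing the decomposition of $\WW$ and the successor map so that cycles project to simple cycles in the index set of $\mathcal{T}(2s)$, i.e. by a monotone refinement in the dimensional order $\preceq$ in the spirit of the proof of Proposition~\ref{prop:finite} --- is where care is needed; everything else is routine bookkeeping with the block identities for $\GGamma$, $\OO$ and $\ctrb$ and with complexification.
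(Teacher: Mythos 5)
Your forward direction is sound and is essentially the paper's argument: a nontrivial kernel vector of the displayed matrix gives either a rank-deficient $\begin{bmatrix}\ctrb_S\\ \GGamma_S\end{bmatrix}$ or a nonzero $\yy\in V_i\subset\mathcal{T}(2s)$ admitting a $2s$-sparse output-nulling segment that returns to $z\yy$; prepending the input that strongly reaches $\yy$ and extending by the geometric series $z^j$ produces two distinct $s$-sparse inputs with equal zero-state response. (You are in fact more explicit than the paper about the $\yy=0$ subcase and about complexification.)

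The genuine gap is exactly where you flag it, in the converse, and your route does not close it. Chaining a successor map through the pieces of $\WW:=\mathcal{T}(2s)\cap\weakuo(2s)$ and extracting a cycle bounds the period only by $c(\WW)$, which for an intersection of two arrangements can exceed $c(\mathcal{T}(2s))$ (it is only bounded by the product of the two sizes); the ``monotone refinement in the dimensional order'' you invoke to repair this is not a construction, and it is not clear one exists. The paper sidesteps the cycle construction entirely: starting from a nonzero $\xx_0\in\WW$, it follows a \emph{single} $2s$-sparse output-nulling trajectory $\xx_k$ that remains in $\WW\subset\mathcal{T}(2s)$ for all $k$, and applies the pigeonhole principle to which of the $c(\mathcal{T}(2s))$ pieces of $\mathcal{T}(2s)$ (not of $\WW$) each $\xx_k$ lies in; among $\xx_0,\dots,\xx_{c(\mathcal{T}(2s))}$ two points $\xx_{k_0},\xx_{k_1}$ share a piece $V=V_i$, so $\tau=k_1-k_0\le c(\mathcal{T}(2s))$ is immediate. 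It then upgrades this point-to-point relation to a subspace relation, asserting via the subspace-preserving structure of the fixed point of \eqref{alg2} a linear map $\bm{F}:V\to\spanof{S}$ with $(\AA^\tau+\ctrb_\tau\bm{F})V\subset V$ and $(\OO_\tau+\GGamma_\tau\bm{F})V=0$, and takes an eigenvector of the restriction of $\AA^\tau+\ctrb_\tau\bm{F}$ to $V$ to produce the offending $z$. If you replace your cycle argument with this single-trajectory pigeonhole, the period bound comes for free, and the only remaining work is the feedback-map step --- which, to be fair, the paper itself only sketches.
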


\begin{proof}
    $(\Rightarrow)$ Suppose there exists $i \in I$, $\tau$, $S \in \Delta_s^\tau$, and $z \in \mathbb{C}$ such that the rank condition fails. Then there exists $\xx \in \mathcal{T}(2s)$ and $U, V \in \spanof{\Delta_s}^\tau$ such that $\AA^\tau\xx + \ctrb_\tau(U - V) = z\xx$ and $\OO_\tau\xx + \GGamma_\tau(U - V) = 0$. Let $\uu, \vv: \Natn \to \spanof{\Delta_s}$ be defined such that for some $M$, $\xx = \ctrb_M[\uu - \vv]_0^M, \GGamma_M[\uu - \vv]_0^M = 0$, this is possible as $\xx \in \mathcal{T}(2s)$. Further, $\forall k \in \Natn$ we have that $[\sigma^M(\uu)]_{k\tau}^\tau = z^k U$ and $[\sigma^M(\vv)]_{k\tau}^\tau = z^k V$, it follows that $\response_0(\uu) = \response_0(\vv)$, hence the system is not invertible.

    $(\Leftarrow)$ Suppose that $\Sigma$ is not left $s$-sparse invertible. If there exists $\tau$ and $S \in \Delta_{2s}^\tau$ such that $\begin{bmatrix}
        \ctrb_S \\ \GGamma_S
    \end{bmatrix}$ is rank deficient, then we have the claim. 
    So in light of proposition \ref{propinv}, assume instead that $\mathcal{T}(2s) \cap \mathcal{V}(2s) \ne 0$. Let $\xx_0 \in \mathcal{T}(2s) \cap \mathcal{V}(2s)$, there exists an input $\uu: \Natn \to \spanof{\Delta_{2s}}$ such that $\forall k \in \Natn, \xx_k := \AA^k\xx_0 + \ctrb_k[\uu]_0^k \in \mathcal{T}(2s) \cap \mathcal{V}(2s) \subset \mathcal{T}(2s)$. 
    Since $\mathcal{T}(2s)$ is a finite subspace arrangement, there exists a subspace $V \subset \mathcal{T}(2s)$ which occurs twice in this trajectory within $c(\mathcal{T}(2s))$ time steps.
    Therefore, denoting $\xx_{k_0}, \xx_{k_1} \in V$ the points where this trajectory passes through $V$ and $\tau = k_1 - k_0$, there exists $S \in \Delta_{2s}^\tau$ and $U$ such that $\AA^\tau\xx_{k_0} + \ctrb_S U = \xx_{k_1}$ and $\OO_\tau\xx_{k_0} + \GGamma_SU = 0$. 
    It may then be shown based on the subspace-preserving property of the iteration \eqref{alg2} that there exists a linear map $\bm{F}: V \to \spanof{S}$ such that $(\AA^\tau + \ctrb_\tau\bm{F})V \subset V$ and $(\OO_\tau + \GGamma_\tau\bm{F})V = 0$. 
    As it is therefore an invariant subspace, $V$ contains an eigenvector $\vv$ of $\bm{A}^\tau + \ctrb_\tau\bm{F}$. 
    Denote $\PPsi_i$ a basis for $V$, and $\vv = \PPsi_i\xx$ and $\tilde{U} = \bm{F}\vv \in \spanof{S}$, we have that there exists $z \in \mathbb{C}$ satisfying
    \[
        \AA^\tau\PPsi_i\xx + \ctrb_\tau \tilde{U} = z\PPsi_i\xx,\quad  \OO_\tau\PPsi_i\xx + \GGamma_\tau \tilde{U} = 0
    \]
    We then conclude $\begin{bmatrix}
        (\AA^\tau - z\bm{I})\PPsi_i & \ctrb_S \\ \OO_\tau\PPsi_i & \GGamma_S
    \end{bmatrix}$ is not full rank.
\end{proof}

This result may be alternatively characterized without detailed knowledge of $\mathcal{T}(2s)$, using only its size and the strong $2s$-sparse reachability index:

\begin{corollary}
    $\Sigma$ is left $s$-sparse invertible if and only if $\forall N \le \tau_{2s} + c(\mathcal{T}(2s)), \forall M < N$, $\forall S \in \Delta_{2s}^N$, $\forall z \in \mathbb{C}$, 
    \begin{equation}
        \rank \begin{bmatrix}
            \GGamma_S \\ \ctrb_S - z\begin{bmatrix}
                \ctrb_{[S]_0^M} & 0
            \end{bmatrix}
        \end{bmatrix} = \sum_{i \in \range{N}} |S_i|.
    \end{equation}
\end{corollary}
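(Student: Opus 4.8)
The plan is to obtain this as a repackaging of Proposition \ref{spectralcond}, eliminating the need to know the bases $\PPsi_i$ by \emph{unfolding} them into reachability inputs. The key observation is that every vector of $\mathcal T(2s) = \mathcal T_{\tau_{2s}}(2s)$ has the form $\ctrb_{\tau_{2s}} W^{(1)}$ for some $W^{(1)} \in \ker \GGamma_{\tau_{2s}} \cap \spanof{\Delta_{2s}}^{\tau_{2s}}$, so prepending such an input of length $\tau_{2s}$ to a $\tau$-periodic support pattern with $\tau \le c(\mathcal T(2s))$ converts the pencil of Proposition \ref{spectralcond} into the pencil here, with $N = \tau_{2s} + \tau \le \tau_{2s} + c(\mathcal T(2s))$ and $M = \tau_{2s}$. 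Concretely, fix $N$, $M < N$, write $\tau := N - M$, split an input $W = (W^{(1)}, W^{(2)})$ with $W^{(1)}$ of length $M$ and $W^{(2)}$ of length $\tau$, and set $\xx := \ctrb_M W^{(1)}$. By the same block bookkeeping behind the identity $\GGamma_N = \begin{bmatrix}\GGamma_{N-M} & 0 \\ \OO_M\ctrb_{N-M} & \GGamma_M\end{bmatrix}$, one checks
\begin{align*}
\GGamma_N W = 0 &\iff \GGamma_M W^{(1)} = 0 \ \text{ and }\ \OO_\tau \xx + \GGamma_\tau W^{(2)} = 0,\\
\ctrb_N W = z\,\ctrb_M W^{(1)} &\iff \AA^\tau \xx + \ctrb_\tau W^{(2)} = z\,\xx .
\end{align*}
Hence the pencil $\begin{bmatrix}\GGamma_S \\ \ctrb_S - z\begin{bmatrix}\ctrb_{[S]_0^M} & 0\end{bmatrix}\end{bmatrix}$ is rank‑deficient iff such a nonzero $W \in \spanof S$ exists, and the conditions $\GGamma_M W^{(1)} = 0$, $W^{(1)} \in \spanof{\Delta_{2s}}^M$ say precisely that $\xx \in \mathcal T_M(2s)$.

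For $(\Leftarrow)$: if $\Sigma$ is not $s$-sparse invertible, Proposition \ref{spectralcond} yields $i \in I$, $\tau \le c(\mathcal T(2s))$, $S \in \Delta_{2s}^\tau$, $z$, and a nonzero kernel vector of its pencil, i.e.\ $\vv \in V_i$ and $W^{(2)} \in \spanof S$, not both zero, with $\AA^\tau \vv + \ctrb_\tau W^{(2)} = z\vv$ and $\OO_\tau \vv + \GGamma_\tau W^{(2)} = 0$. If $\vv \neq 0$, choose $W^{(1)} \in \ker \GGamma_{\tau_{2s}} \cap \spanof{\Delta_{2s}}^{\tau_{2s}}$ with $\ctrb_{\tau_{2s}} W^{(1)} = \vv$ (possible since $\mathcal T(2s) = \mathcal T_{\tau_{2s}}(2s)$); then $W = (W^{(1)}, W^{(2)})$ is a nonzero kernel vector of the corollary's pencil at $M = \tau_{2s}$, $N = \tau_{2s} + \tau$. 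If $\vv = 0$, then $W^{(2)} \neq 0$ lies in $\ker \ctrb_\tau \cap \ker \GGamma_\tau \cap \spanof{\Delta_{2s}}^\tau$, so the corollary's pencil at $z = 0$, $M = 0$, $N = \tau$ is already rank‑deficient. The remaining failure mode of Proposition \ref{propinv}, $\ker \begin{bmatrix}\DD_S\\\BB_S\end{bmatrix} \neq 0$ for some $S \in \Delta_{2s}$, is the $\vv = 0$ case with $N = 1$. In all cases the stated rank equality fails.

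For $(\Rightarrow)$: assume the rank identity fails at some $N \le \tau_{2s} + c(\mathcal T(2s))$, $M < N$, $S$, $z$, with nonzero $W = (W^{(1)}, W^{(2)})$ in the kernel, and set $\xx := \ctrb_M W^{(1)}$. If $\xx \neq 0$, then $\xx \in \mathcal T_M(2s) \subset \mathcal T(2s)$ (the $\mathcal T_k(2s)$ increase to $\mathcal T(2s)$), and, splitting $W^{(1)}, W^{(2)}$ into $s$-sparse halves and continuing $W^{(2)}$ periodically with geometric factor $z$ — i.e.\ using $s$-sparse $\uu, \uu'$ with $[\uu - \uu']_0^M = W^{(1)}$ and $[\uu - \uu']_{M + k\tau}^\tau = z^k W^{(2)}$ — a period‑by‑period induction, exactly as in the proof of Proposition \ref{spectralcond}, gives $\response_0(\uu) = \response_0(\uu')$ with $\uu \neq \uu'$. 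If $\xx = 0$, the relations force $\ctrb_N W = 0$ and $\GGamma_N W = 0$ with $W \neq 0$ and $S \in \Delta_{2s}^N$, so $\begin{bmatrix}\ctrb_S\\\GGamma_S\end{bmatrix}$ is rank‑deficient and the lemma preceding Proposition \ref{spectralcond} already rules out $\mathcal U_N(s)$-invertibility, hence $s$-sparse invertibility. Either way $\Sigma$ is not $s$-sparse invertible.

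I expect the main obstacle to be the block‑matrix bookkeeping summarized in the two displayed equivalences — correctly aligning the prefix maps $\ctrb_M, \GGamma_M, \OO_M$ and the tail maps $\ctrb_\tau, \GGamma_\tau, \OO_\tau$ against the full $\ctrb_N, \GGamma_N$ when an input is split at time $M$ — together with the routine but essential facts that $\mathcal T_k(2s) \subset \mathcal T_{k+1}(2s) \subset \mathcal T(2s)$ for every $k$ and $\mathcal T(2s) = \mathcal T_{\tau_{2s}}(2s)$, which is exactly what lets an arbitrary point of $\mathcal T(2s)$ be absorbed into a prefix of fixed length $\tau_{2s}$ while keeping the unfolded horizon $N$ within the claimed bound $\tau_{2s} + c(\mathcal T(2s))$.
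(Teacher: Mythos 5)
The paper states this corollary without proof, but your derivation is exactly the route its surrounding prose indicates: you eliminate the bases $\PPsi_i$ of Proposition \ref{spectralcond} by writing each point of $\mathcal{T}(2s)=\mathcal{T}_{\tau_{2s}}(2s)$ as $\ctrb_{\tau_{2s}}W^{(1)}$ with $W^{(1)}\in\ker\GGamma_{\tau_{2s}}\cap\spanof{\Delta_{2s}}^{\tau_{2s}}$ and absorb it as a length-$M$ prefix, which is precisely why the bound $N\le\tau_{2s}+c(\mathcal{T}(2s))$ appears. Your block identities, the case split on $\xx=\ctrb_M W^{(1)}$ being zero or not, and the handling of the $\bigl[\begin{smallmatrix}\ctrb_S\\ \GGamma_S\end{smallmatrix}\bigr]$ rank-deficiency failure mode are all correct, so the argument is complete.
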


\section{Example: Network with Edge Attacks}

\begin{figure}
    \centering
    \vspace{1em}\includegraphics[width=0.8\linewidth]{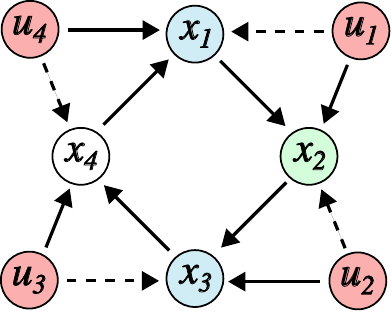}
    \caption{The system $\Sigma_\alpha$ depicted as a network. Solid black arrows indicate a weight of $+1$, dashed indicates $-1$. Red vertices are inputs, and others are states. Blue indicates the vertex state is included in the measurement $\CC_\alpha\xx$, and green indicates inclusion when $\alpha = 1$.}
    \label{fig:enter-label}
\end{figure}

In this section, we demonstrate our results on two linear systems, one which is left $1$-sparse invertible and one which is not, illustrating the three primary characterizations of left $s$-sparse invertibility introduced in section \ref{section:inv}, as well as an instance of a nontrivial weakly unobservable subspace arrangement $\mathcal{V}(1)$.

Consider the following system $\Sigma_\alpha = (\AA, \BB, \CC_\alpha, \DD)$, where $\alpha \in \{0,1\}$, depicted as a network in figure \ref{fig:enter-label}:
\[
    \left[\begin{array}{c|c}
        \AA & \BB \\
        \hline
         \CC_\alpha & \DD
    \end{array}\right] = \left[ \begin{array}{cccc|cccc}
         0 & 0 & 0 & 1 & -1 & 0 & 0 & 1  \\
         1 & 0 & 0 & 0 & 1 & -1 & 0 & 0 \\
         0 & 1 & 0 & 0 & 0 & 1 & -1 & 0 \\
         0 & 0 & 1 & 0 & 0 & 0 & 1 & -1 \\
         \hline
         1 & 0 & 0 & 0 & 0 & 0 & 0 & 0 \\
         0 & \alpha & 0 & 0 & 0 & 0 & 0 & 0 \\
         0 & 0 & 1 & 0 & 0 & 0 & 0 & 0 
    \end{array}\right].
\]
The dynamics simply permute the state of four nodes, and $\CC_\alpha$ measures the state of nodes $1$ and $3$, and $2$ if $\alpha=1$. The inputs may be thought of as edge attacks which could ``interfere'' with the transmission of information from a given node to the next, by changing the apparent amount of information transmitted.
Since $3 = p < m = 4$ in this example, the system is not classically left invertible, but here we will suppose that inputs are $1$-sparse. We will demonstrate that the system is not left $1$-sparse invertible with $\alpha=0$ but is left $1$-sparse invertible when $\alpha=1$, using propositions \ref{propinv}, \ref{rankprop}, and \ref{spectralcond}.

\subsection{$\alpha = 0$, Geometric Characterization}
Consider the state $\xx_k = \bm{e}_4 \in \ker \bm{C}_0$. By choosing $\uu_k = -\bm{e}_4$, $\xx_{k+1} = \AA\xx_k + \BB\uu_k = \bm{e}_1 + \bm{e}_4 - \bm{e}_1 = \bm{e}_4$. Hence, for all $s \ge 1$, $\spanof{4} \subset \mathcal{V}(s)$. Likewise, $\xx_k = \bm{e}_2$ implies $\xx_k \in \ker \bm{C}_0$, and a choice of $\uu_k = -\bm{e}_2$ results in $\xx_{k+1} \in \bm{C}_0$. So $\spanof{2} \subset \mathcal{V}(s)$. However, suppose $\xx_k \in \spanof{2, 4} \setminus (\spanof{2} \cup \spanof{4})$, then no 1-sparse input can result in $\xx_{k+1} \in \ker \CC_0$, but there does exist a 2-sparse input satisfying this requirement. It follows that $\mathcal{V}(1) = \spanof{2} \cup \spanof{4}$ and $\mathcal{V}(2) = \spanof{2, 4}$.

Observe that $\ker \GGamma_1 = \ker\DD = \mathbb{R}^m$, so $\BB\spanof{\Delta_2} \subset \mathcal{T}(2)$. In particular, it contains $\BB(\bm{e}_1 + \bm{e}_4) = \bm{e}_2 - \bm{e}_4$, which is also contained in $\mathcal{V}(2)$. Therefore, by proposition \ref{propinv}, $\Sigma_0$ is not $1$-sparse invertible.

\subsection{$\alpha = 1$, Rank-Based Characterization}
Consider that the matrix
\begin{align}
    \CC_1\BB = \begin{bmatrix}
        -1 & 0 & 0 & 1\\
        1 & -1 & 0 & 0 \\
        0 & 1 & -1 & 0
    \end{bmatrix}
\end{align}
satisfies $\rank \CC_1\BB_{S_0} = |S_0|$ for any $S_0 \in \Delta_{2}$. Since $\DD = 0$, for any other $S_1 \in \Delta_2$,  $\rank \GGamma_{(S_0, S_1)} - \GGamma_{S_1} = \rank \CC_1\BB_{S_0} = |S_0|$, so by proposition \ref{rankprop}, $\Sigma_1$ is left $1$-sparse invertible with delay 1.

\subsection{$\alpha = 0$, Spectral Characterization}
Consider the strongly $2s$-sparse reachable point $\xx_0 = \bm{e}_2 - \bm{e}_4$. The input $\uu_0 = \bm{e}_3 - \bm{e}_1$ results in $\xx_1 = \AA\xx_0 + \BB\uu_0 = \bm{e}_4 - \bm{e}_2$. Let $\PPsi_i$ denote a basis for the subspace of $\mathcal{T}(2s)$ to which $\xx_0$ belongs, we may write $\xx_0 = \PPsi_i \vv_0$. Setting $z = -1$, we have that 
\begin{equation}
\begin{bmatrix}
    \AA\PPsi_i & \BB \\ \CC\PPsi_i & \DD
\end{bmatrix}\begin{bmatrix}
    \vv_0 \\ \uu_0
\end{bmatrix} = \begin{bmatrix}
    z \PPsi_i\vv_0 \\ 0
\end{bmatrix}
.
\end{equation}
By proposition \ref{spectralcond}, we may conclude that $\Sigma_0$ is not left $1$-sparse invertible.

\section{Discussion}
As a collection of necessary and sufficient conditions for the well-posedness of sparse recovery problems, this work deals with conditions that are by their nature computationally hard \cite{mccormick1983combinatorial}. However, they provide a natural scaffolding to deal with sparse inversion problems, and in particular for structured and networked systems, where the inherent delay of a system can be generically nontrivial \cite{garin2022generic}. We expect that, as in the case of the classical $\ell_1$ relaxation for sparse recovery, optimal and computationally tractable implementations will arise as relaxations of appropriate combinatorial problems in this setting. The following are interesting future directions and applications of our results:

\subsection{Bounding constants associated with $\mathcal{T}(s), \mathcal{V}(s)$}
The difficulty of verifying the conditions in this work is determined by the size of $\mathcal{V}(s), \mathcal{T}(s)$ and their indices $\nu_s, \tau_s$. We expect that the magnitude of $\nu_s$ implied by the proof of proposition \ref{prop:finite} is a dramatic overshoot in most cases, and future work should seek to establish general bounds.

\subsection{Connections to Inversion of Switched Systems}
One may formally identify linear systems with sparse inputs with a class of switched systems: consider the support $S: \Natn \to \Delta_{s}$ to be an unknown switching signal determining the time varying system $\Sigma_S(k) = (\AA, \BB_{S_k}, \CC, \DD_{S_k})$. We expect that our results on invertibility may therefore be generalized to a class of switched systems, where switching is restricted to the $\BB$ and $\DD$ matrices.

\subsection{Generalization to Inputs taking values in Subspace Arrangements}
All of our results use properties of sparsity which are also features of subspace arrangements generally. We therefore expect immediate generalization to the setting where $\uu: \Natn \to \mathcal{U} \subset \mathbb{R}^m$, and $\mathcal{U} = \bigcup_{i \in I}U_i$ is a finite subspace arrangement. In the case that $\mathcal{U}$ is a subspace arrangement that is not finite--for example, $\mathcal{U}$ taken to be the set of rank 1 matrices--generalization is less clear, and is an interesting direction for future work. Such results could 

\subsection{Strong Observability and Unknown Input Observers}
Strong observability, the ability to recover the initial state in finite time in the presence of unknown inputs, may be geometrically characterized for linear systems as having a trivial weakly unobservable subspace. Analogously, $\mathcal{V}(2s) = 0$ is necessary and sufficient for initial state recovery in the presence of unknown $s$-sparse inputs. Exploring a characterization of strong detectability, known to be a necessary and sufficient condition for the existence of an unknown input observer for linear systems, is a direction of interest in the sparse case.

\subsection{Right Invertibility}
Right invertibility is formally the ability to construct an input and initial condition which produces any desired output. Due to space constraints, we have restricted our exposition in this work to focus on left invertibility; however, we expect similar arguments to lead to characterizations of right invertibility in the sparse input setting.

\section{Conclusions}
In this work, we have established the first necessary and sufficient conditions for the left invertibility of linear systems with sparse inputs. Leveraging properties of the novel weakly unobservable and strongly reachable subspace arrangements, these characterizations echo the fundamental characterizations of invertibility for standard linear systems. We expect that these characterizations will lead to the generalization of a variety of techniques for inversion of linear systems, and enable a systematic approach to related problems in sparse control.

\bibliography{IEEEabrv,references.bib}

\end{document}